\theoremstyle{definition}
\newtheorem{theorem}{Theorem}[section]
\newtheorem{definition}{Definition}[section]
\newtheorem{prop}{Proposition}
\newtheorem*{remark}{Remark}
\DeclareMathOperator{\tr}{Tr}
\title{Enumerating planar stuffed maps as hypertrees of mobiles} 
\author{
\parbox{8cm}{
\centerline {\sc Nathan Pagliaroli}
\centerline{\small University of Waterloo}
\centerline{\small\tt npagliar@waterloo.ca }}}
\begin{document}
	
	\maketitle
	
	\begin{abstract}
		 A planar stuffed map is an embedding of a graph into the 2-sphere $S^{2}$, considered up to orientation-preserving homeomorphisms, such that the complement of the graph is a collection of disjoint topologically connected components that are each homeomorphic to $S^{2}$ with multiple boundaries. This is a generalization of planar maps whose complement of the graph is a collection of disjoint topologically connected components that are each homeomorphic to a disc. The main goal of this work is to construct a bijection between bipartite planar stuffed maps and collections of integer-labelled trees connected by hyperedges such that they form a hypertree, called hypermobiles. This bijection directly generalizes the Bouttier-Di Franceso-Guitter bijection between bipartite planar maps and mobiles. Additionally, we show that the generating functions of these trees of mobiles satisfy both an algebraic equation, generalizing the case of ordinary planar maps, and a new functional equation.  As an example, we explicitly enumerate a class of stuffed quadrangulations.
	\end{abstract}
	
	
	\section{Introduction}

  Planar maps were first studied and enumerated by Tutte in the 1960's \cite{tutte1968enumeration} and have since appeared in statistical mechanics, quantum gravity, and probability theory. For a detailed history see \cite{schaeffer2015planar}. One key insight discovered was that planar maps are in bijection with decorated trees \cite{cori1981planar,schaeffer1998conjugaison} from which enumerative formulae could be derived. Of particular relevance to this work is the bijection of Bouttier-Di Franceso-Guitter (BDFG) \cite{bouttier2004planar}, which is between planar maps and plane trees with integer-labelled vertices called mobiles. Such bijections have since been generalized beyond planar maps \cite{chapuy2009bijection,miermont2009tessellations}.  In addition to enumeration, these bijections gave insight into the discrete geometry of maps which has found applications in theoretical physics and probability theory \cite{bouttier2003geodesic,le2010scaling}. 


    In this work we construct a bijection between stuffed maps and types of decorated hypertrees which generalizes the BDFG bijection for bipartite stuffed maps. Instead of mobiles, the bijection constructed is for what we refer to as hypermobiles. Hypermobiles are collections of integer labelled plane trees in which some of the vertices of the mobiles are connected by hyperedges such that the underlying hypergraph is a hypertree. When restricted to ordinary maps, this bijection reduces to the BDFG bijection and hypermobiles to mobiles. The first half of the paper focuses on the construction and proof of this bijection. The second half focuses on planar stuffed map and hypermobile enumeration. In particular, we derive a generalization of the tree equation for certain generating functions as well as a functional equation for pointed stuffed maps.  We also give a detailed combinatorial interpretation of the bijection induced by the functional equation.  Lastly, as an example, we explicitly enumerate a type of stuffed quadrangle using the tree equation and functional equation. 

   As the reader will see, stuffed maps have many interesting combinatorial properties, and their enumeration is much more complicated than ordinary maps.  Stuffed maps were first introduced in \cite{borot2014formal}, where it was shown that stuffed maps of any genus with boundaries could be recursively enumerated using a process called blobbed topological recursion. This work was largely motivated by the study of multi-tracial matrix models. As models of quantum gravity, the multi-tracial terms in the potential of such matrix models contribute ``wormholes" in the two-dimensional universes of their Feynman diagrams i.e. stuffed maps. In particular, an interior of a 2-cell with multiple boundaries corresponds to a wormhole and a path-connected component corresponds to universes in the physics literature \cite{das1990new,ambjorn2001lorentzian}.  Such matrix models are also of interest in what is known as the duality of Liouville quantum gravity \cite{klebanov1995non,duplantier2009duality,rhodes2014gaussian}. Other applications include the $O(n)$ model which involves ordinary maps decorated with self-avoiding loop configurations which can be shown to be equivalent to stuffed maps as well as maps with intersecting loops.

We hope that this paper will lay the foundations for studying the geometric and statistical properties of stuffed maps. Stuffed maps have yet to be fully studied from the perspective of random maps. Recently, much work has been done to connect the Gromov-Hausdorf limit of random metric spaces to random surfaces from two-dimensional quantum gravity. Researchers are looking for new random surfaces, and many types of maps have been shown to converge to such spaces \cite{le2019brownian}. It is believed, based on critical exponent analysis, that stuffed maps will not fall into the same class as ordinary maps, and hence might be related to new types of random surfaces. One can enumerate stuffed maps by computing formal multi-tracial matrix integrals, and in the physics literature such integrals are conjectured to have a continuum limit with "baby-universes" \cite{korchemsky1992matrix,ambjorn1993baby}. Certain coloured stuffed maps appear also in the study of tensor models \cite{bonzom2017}. Establishing bijections with trees could be used as a first step in establishing a rigorous construction of a random surfaces with such features, as has been done for many types of ordinary maps \cite{le2010scaling, le2013uniqueness,bettinelli2017compact,gwynne2017scaling}. 

This article is organized as follows. Section 2 gives a brief review of stuffed maps and then proves the bijection with hypermobiles. Section 3 is dedicated to the enumeration of planar stuffed maps. An analogue of the tree-equation for maps is derived and a functional equation is established for pointed stuffed. A bijective interpretation of the latter is given.  In Section \ref{sec:unstab;e},  we explore the specific example of stuffed maps glued from quadrangles and 2-cells with two boundaries of length two.

 \subsection*{Acknowledgements} 
    The author would like to thank Jeremy Gamble for his helpful discussions and acknowledge the support of the Natural Sciences and Engineering Research Council of Canada (NSERC). 

	\section{Background on stuffed maps}
	\subsection{Planar stuffed maps}\label{sec: planar stuffed maps}
	A \textit{map} of genus $g$ is an equivalence class of 2-cell embeddings of a graph into an oriented surface of genus $g$, considered up to orientation-preserving homeomorphisms. By 2-cell embedding, we mean that the complement of the graph is a collection of disjoint topologically connected components that are each homeomorphic to an open disk, called a face. This concept also gives rise to a useful method for building maps. Since any map is a 2-cell embedding, by taking a collection of oriented polygons and gluing their edges, called \textit{ half-edges}, together in an orientation-preserving manner, one can construct any map. By an orientation-preserving manner, we mean that each face has a recto and verso side,  and one glues them such that all rectos are aligned in orientation and and all versos are aligned in the opposite orientation.

    The \textit{genus} of a connected map is the genus of the underlying surface. Throughout this work we restrict our attention solely to maps that are genus zero (planar) as well as \textit{bipartite,} i.e.  a map whose graph is bipartite. Bipartite maps can be equivalently characterized as only being glued from polygons with an even number of edges. We will also require some decorations of maps. A map is said to be \textit{pointed} if it has a distinguished vertex, which we refer to as the source. A map is \textit{rooted} if one of its 2-cells has a distinguished and oriented edge, denoted by a half-arrow.  A \textit{boundary} of a map is a distinguished and rooted face.

It is natural to consider maps with more general embeddings on surfaces. An \textit{(elementary planar) 2-cell} with $k$ perimeters of lengths $(\ell_{1},...,\ell_{k})$ is a topological oriented genus zero surface with $k$ polygonal boundaries of lengths $(\ell_{1},...,\ell_{k})$.   Any 2-cell with one boundary we will refer to as a polygon, otherwise we will refer to it as a \textit{multi-boundary 2-cell}. We will refer to the interior of a multi-boundary 2-cell as a \textit{branch}. A \textit{(planar) stuffed map} is an embedding of a graph into an oriented surface of genus $g$, considered up to orientation-preserving graph homomorphisms, such that the complement of the graph is a collection of disjoint topologically connected components that are each homeomorphic to elementary planar 2-cells. This paper restricts its attention to planar bipartite maps and planar bipartite stuffed maps, which we will simply refer to as (ordinary) maps and stuffed maps.  An orientation-preserving map homomorphism is a \textit{map homeomorphism}. For a given map, the automorphism group is a subgroup of the permutations of the possible labelling of the elementary 2-cells used to construct the map. As with ordinary maps, the genus of a stuffed map is the genus of the underlying surface.

\begin{figure}[h]\label{fig:example maps}
    \centering
    \begin{subfigure}[t]{0.5\textwidth}
        \centering
        \includegraphics[height=2.5in]{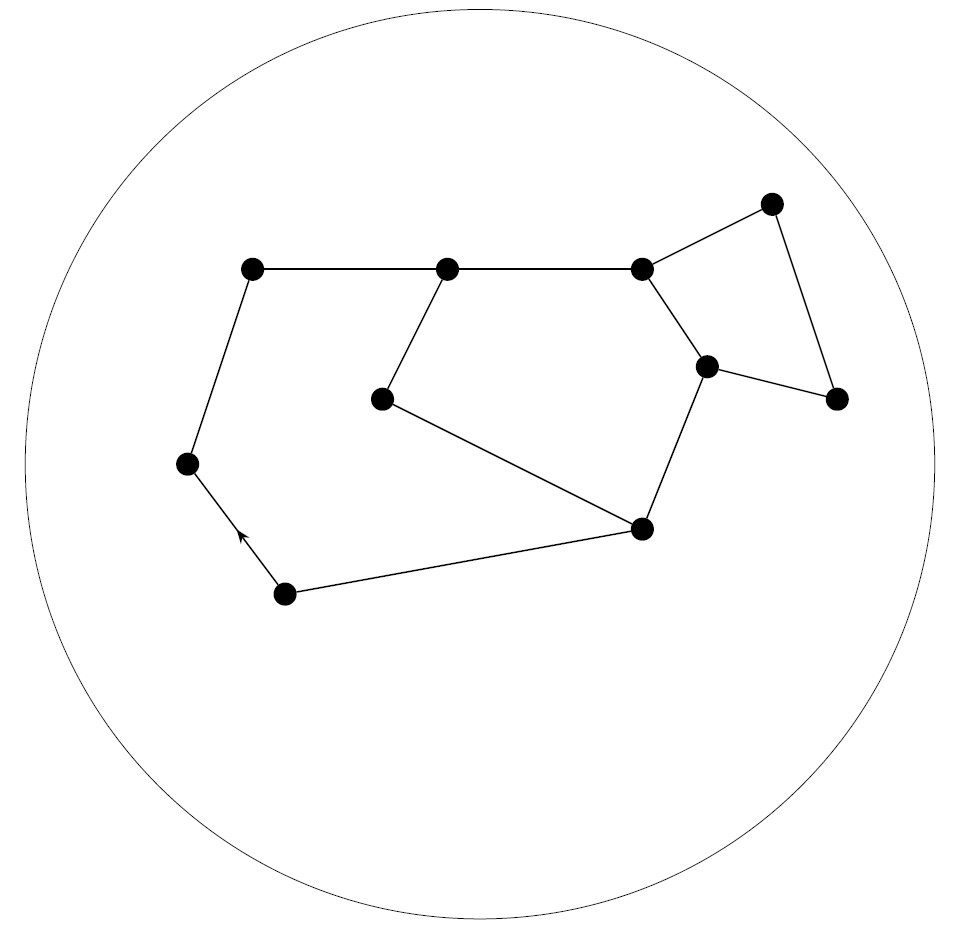}
        \caption{}
    \end{subfigure}%
    ~ 
    \begin{subfigure}[t]{0.5\textwidth} 
        \centering
        \includegraphics[height=2.5in]{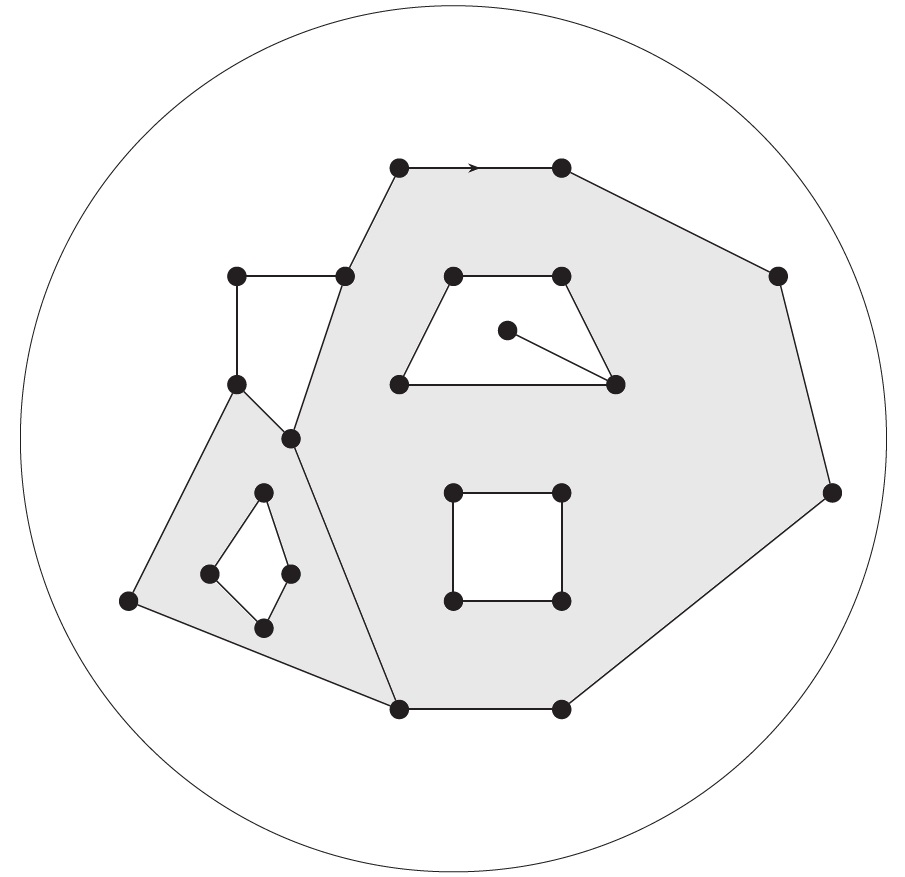}
        \caption{}
    \end{subfigure}
    \caption{Sub-figure (a) is an example of an bipartite planar rooted (ordinary) map. Sub-figure (b) is an example of a (bipartite planar) rooted stuffed map. The grey shaded region distinguishes a 2-cell with more than one boundary. In particular, this map is constructed from two such 2-cells: one with boundaries of lengths eight, six, and four, and the other with two boundaries both of length four.}
\end{figure}

\begin{definition}
    Given a set $S$ of 2-cells with even boundary lengths, for $v>1$ let $\mathbb{M}_{\ell}(S;v)$ denote the set of stuffed planar topologically connected maps with $v$ vertices glued in an orientation-preserving manner from a boundary of length $\ell$, and optionally elements of $S$. Let $d$ denote the largest boundary length of any 2-cell in $S$, which does not need to be finite. Let $\mathbb{M}(S;v)$ denote the same set without boundary but with a rooted edge. By convention, we define the only map in $\mathbb{M}_{1}(S;1)$ to be the trivial map, i.e. a single vertex, embedded into the sphere. 
\end{definition}

 One very important note about stuffed maps is that, unlike ordinary maps, a stuffed map being topologically connected does not imply that its underlying graph is connected.  Since the branches of a stuffed map do not contain vertices, edges, or faces, the Euler characteristic of a stuffed map is the same with or without its branches i.e. for a stuffed map $M$, it is 
 \begin{align*}
        \chi(M) = 2K(M) -2g(M)&=  V(M) - E(M) +F(M),
    \end{align*}
    where $K(M)$ is the number of connected components of $M$, $g(M)$ is the genus, $V(M)$ is the number of vertices, $E(M)$ the number of edges, and $F(M)$ the number of faces. For a rooted stuffed map, we will refer to the component with the root as the \textit{gasket}. As mentioned before, this paper is strictly interested in studying planar maps i.e. genus zero maps. 

To any stuffed map, one can canonically associate a hypergraph whose vertices are components and whose edges are branches. Every hyperedge is between connected components that are connected by the same branch. See Figure \ref{fig: associated hypertree} for an example. Note that this hypergraph is in fact a hypertree, since any cycle would correspond to a sequence of branches and components in the original graph that would violate planarity. We will refer to this hypergraph as the \textit{associated hypertree} $H_{M}$ to a stuffed map $M$.

\begin{figure}[H]
    \centering
        \centering
        \includegraphics[height=2.5in]{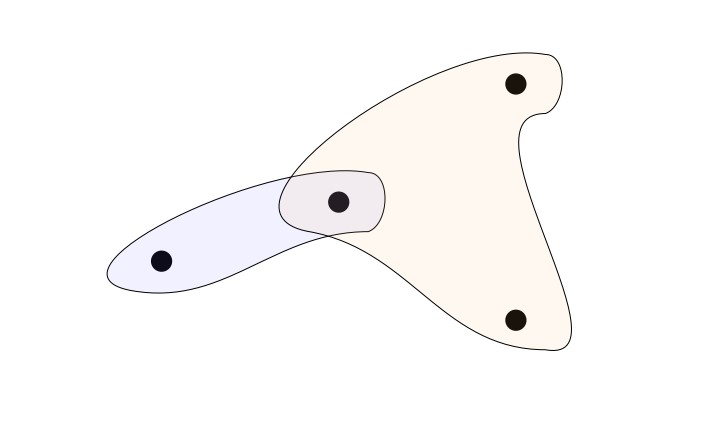}
    \caption{The associated hypertree to the stuffed map in Figure 1 (b). The middle vertex corresponds to the gasket.}
    \label{fig: associated hypertree}
\end{figure}

\subsection{Mobiles}
    Define a \textit{mobile} as a bipartite plane tree with black vertices and integer-labelled white vertices such that:
\begin{itemize}
    \item Any edge connects a white vertex to a black vertex.
    \item For any white vertex labelled $n$ that is attached to a black vertex, any adjacent clockwise white vertex attached to the same black vertex must be labelled at least $n-1$.
    \item If the mobile is rooted, the source of the rooted edge is a white vertex labelled zero.
\end{itemize}
 The BDFG bijeciton is between pointed rooted bipartite planar maps and rooted mobiles \cite{bouttier2004planar}, which generalized the bijection in \cite{schaeffer1998conjugaison} between quadrangulations and labelled trees. 

For the purpose of making this article self-contained, we will recall the BDFG bijection.  Let $M$ be a rooted and pointed (bipartite planar) map. Then we can construct the corresponding mobile $\Omega(M)$ via the following:
 \begin{enumerate}
     \item Set the colour of all vertices of $M$ to be white and draw a black vertex in every face of $M$.  Label the pointed vertex zero and then all other vertices with the graph distance from the pointed vertex.
     \item  For every edge $e$ of $M$, let $v$ be the vertex with the largest label. Draw an edge from the vertex $v$ to the black vertex in the face to the left $e$ when directed at $v$. If $e$ is the rooted edge, then take the new edge to be rooted, oriented away from $v$. 
         \item Erase all of the original edges, all vertices with label zero, and all branches of $M$. Uniformly shift all labels such that the source vertex of the root is labelled zero.
     \end{enumerate}

Given a vertex of a map, a \textit{corner} of said vertex refers to the angular region between two adjacent edges. Given a mobile $m$, we can construct a map $\Omega^{-1}(T)$ as follows:
\begin{enumerate}
 \item Add a white vertex labelled one less than the minimum label $n_{\text{min}}(m)$ of $m$.
        \item For every corner of a white vertex in $m$, if the label of the vertex is $n_{\text{min}}(m)$ then connect it to the white vertex added for that mobile in the previous step. Otherwise, if the label is $n \not = n_{\text{min}}(m)$, then connect it to the next white vertex labelled $n-1$ in a clockwise direction around the contour. If this corner is the corner to the left of a root, the new edge is rooted pointing away for the corner.
    \item Remove all the original edges and black vertices of $T$. 
\end{enumerate}

\subsection{Hypermobiles}
In this work, we required a generalization of mobiles. Recall that a hypergraph is a generalization of a graph where edges are replaced by hyperedges that can connect multiple vertices.  

\begin{figure}[H]
    \centering
    \begin{subfigure}[t]{0.5\textwidth}
        \centering
        \includegraphics[height=2.5in]{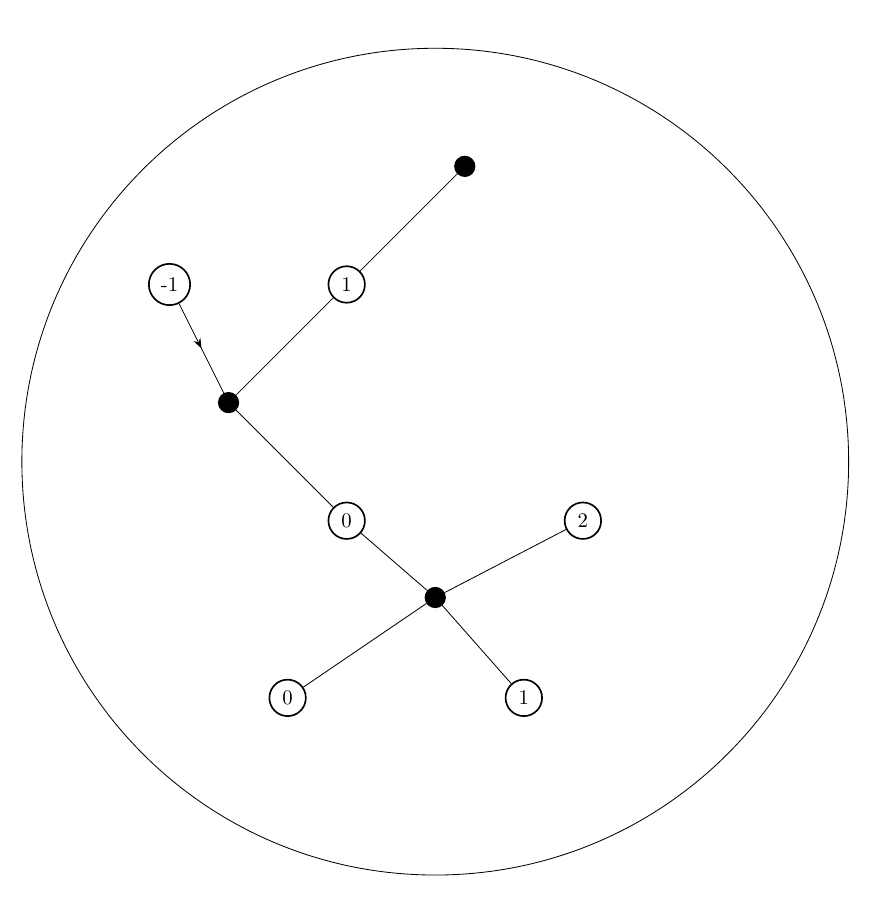}
        \caption{}
    \end{subfigure}%
    ~ 
    \begin{subfigure}[t]{0.5\textwidth}
        \centering
        \includegraphics[height=2.5in]{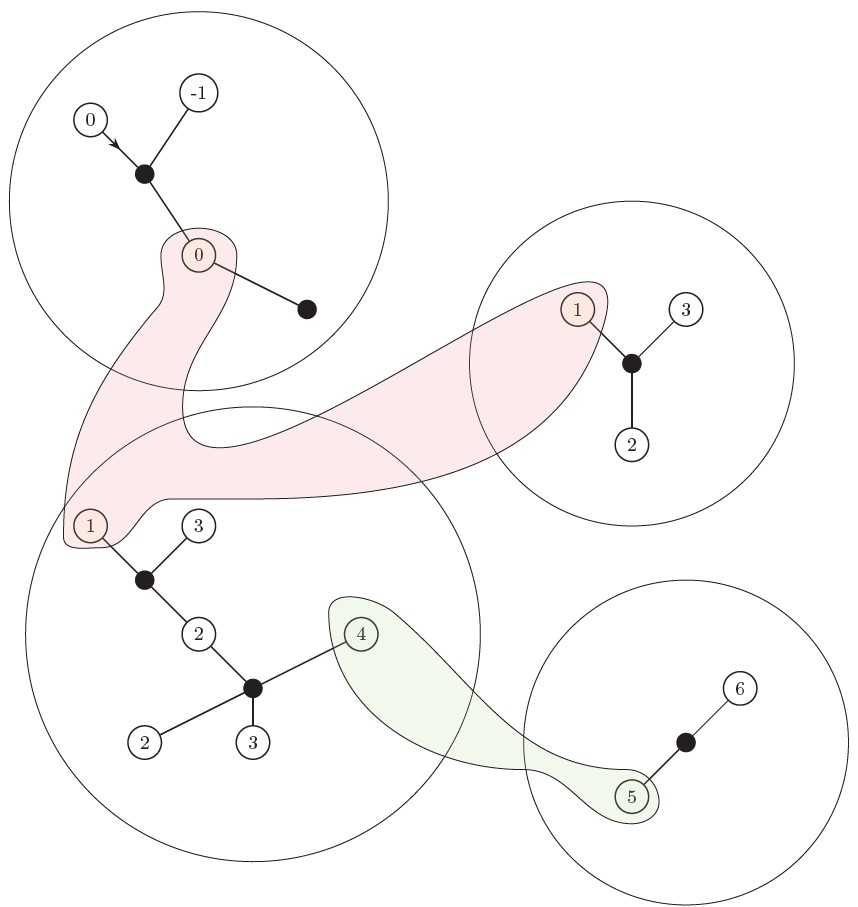}
        \caption{}
    \end{subfigure}
    \caption{Sub-figure (a) is an example of an mobile. Sub-figure (b) is an example of a hypermobile, where each hyperedge is represented by a different colour.}
\end{figure}
\begin{definition}
    We define a \textit{hypermobile} as a triple $(\mathcal{M},H,r)$ composed of:
\begin{itemize}
    \item A set of mobiles $\mathcal{M}$ with a unique  mobile with the lowest labelled vertex, called the \textit{origin mobile}. 
    \item A set of hyperedges $H$ between 
 the white vertices of distinct mobiles called \textit{gate vertices}, such that the underlying hypergraph is a hypertree. On every mobile that is not the origin mobile, the lowest labelled vertex is the gate vertex closest to the origin mobile. Its label is one more than the label of the gate vertex  closest to the origin mobile within that hyperedge.
 \item A root $r$ on an edge of a mobile whose source vertex is labelled zero, and called the \textit{origin}.
\end{itemize}
\end{definition}

To make our bijeciton between hypermobiles and stuffed maps more precise to every set $S$ of 2-cells we associate a set $S'$ of vertices and hyperedges as follows:
\begin{itemize}
    \item For each 2-cell with the topology of a disc whose boundary is length $2\ell$, one has a black vertex with valency $\ell$.
    \item For each 2-cell with $k>1$ boundaries of lengths $2\ell_{1},2 \ell_{2},...,2 \ell_{k}$, one has a hyperedge between $k$ black vertices with valency $\ell_{1},\ell_{2},...,\ell_{k}$, respectively. 
\end{itemize}
Note that even though the mobiles forming a hypermobile are plane trees, a hypermobile's hyperedges in general do not have associated planar embedding, so hypermobiles are not themselves maps, unlike mobiles.

We can now define the set of hypermobiles 
  $\mathbb{T}(S';v)$ with $v$-many labelled white vertices  constructed from components of $S'$.
  To summarize, the valency of a black vertex in such a mobile can be half the length of any boundary of any 2-cell in $S$ and the valency of any hyperedge can be the number of boundaries of any 2-cell in $S$. 

\subsection{Branch-pointed stuffed maps}
 As mentioned in the Introduction, one motivation to study stuffed maps is to establish convergence, in some sense, as a random discrete metric space to a random continuous metric space. All methods known to the author require a bijection to some path-connected tree-like object. However, one distinguishing feature of stuffed maps is that they may have distinct path-connected components connected by branches, which makes it not always possible to label vertices with their graph distances from the source vertex. To facilitate such bijections, we will slightly change the type of stuffed map studied in this work by decorating them and restricting their 2-cells. Define   a \textit{branch-pointed stuffed} (BPS) map as a stuffed map such that each boundary of a  multi-boundary 2-cells has a distinguished vertex called a \textit{spurious point}. There is an additional distinguished point that does not coincide with a spurious point.  The source cannot coincide with a spurious point. One can define the following set of BPS maps analogously to our definition of $\mathbb{M}_{\ell}(S;v)$ for stuffed maps: denote it $\overline{\mathbb{M}}_{\ell}(S;v)$.

 Working with rooted BPS maps allows one to give each vertex a positive integer label, which is key to the bijection.  Note that on the level of enumeration, computing the generating function of BPS maps suffices to enumerate stuffed maps and vice versa. For more details, see the Remark in Section 3.1.

\subsection{A bijection with hypermobiles}
In this section we will construct and prove the main bijection between planar BPS maps and hypermobiles. First, consider the mapping $\Phi$ that takes a rooted pointed BPS map $M \in\overline{\mathbb{M}}^{\circ}(S;v)$ to a hypergraph $\Phi(M)$, constructed as follows:
     \begin{enumerate}

\item Set the colour of all vertices of $M$ to be white. For every multi-boundary 2-cell, draw a hyperedge between the spurious points of each boundary. On the pointed component, assign the label of zero to the source and label every vertex belonging to the pointed component with its graph distance from the source.
\item Move every hyperedge to the white vertex in a clockwise direction along the boundary face of each spurious point.
         \item   In every 2-cell draw a black vertex associated with each boundary.  For every edge $e$ of $M$, let $v$ be the vertex with the largest label. Draw an edge from the vertex $v$ to the black vertex in the face to the left $e$ when directed at $v$. If $e$ is the rooted edge, then take the new edge to be rooted, oriented away from $v$. 
         \item Erase all original edges, all vertices that are the minimal label on a component, and all branches of $M$.  Uniformly shift all labels so that the source of the rooted edge is labelled zero.
     \end{enumerate}

   For an example of this construction, see Figure \ref{fig:bijection p1}.
     
\begin{remark}
    Note that for Step 3 in the construction of $\Phi(M)$, ``...left of $e$ when directed at v.'' refers to left in the local frame of reference on the surface, which may not be the same as left as seen in a figure's representation of the stuffed map. For example, in Figure 4 the largest component's left will be opposite the left of the other components as it is drawn on the page.   One can visualize different components on opposite sides of the surface but squished together to be succinctly represented in the figure.
\end{remark}

 \begin{figure}[p]\centering
\subfloat[]{\label{a}\includegraphics[width=.45\linewidth]{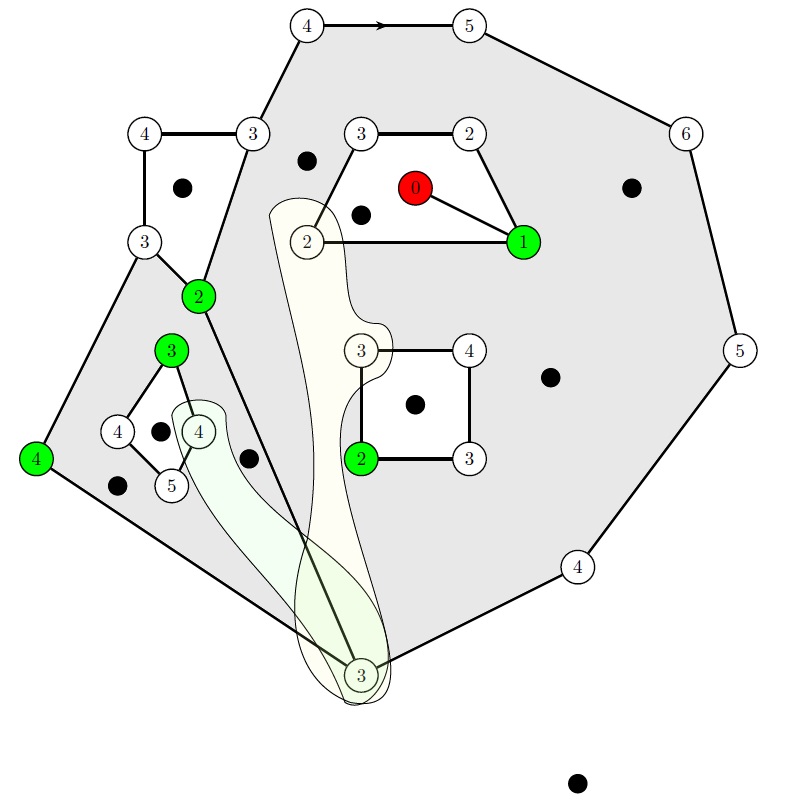}}\hfill
\subfloat[]{\label{b}\includegraphics[width=.45\linewidth]{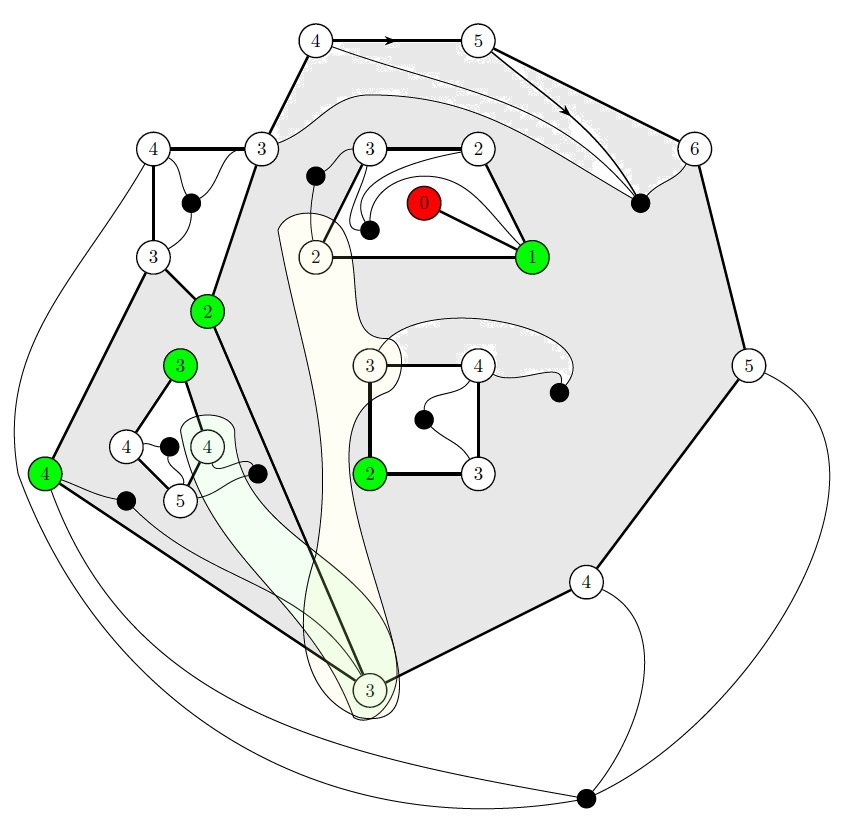}}\par 
\subfloat[]{\label{c}\includegraphics[width=.45\linewidth]{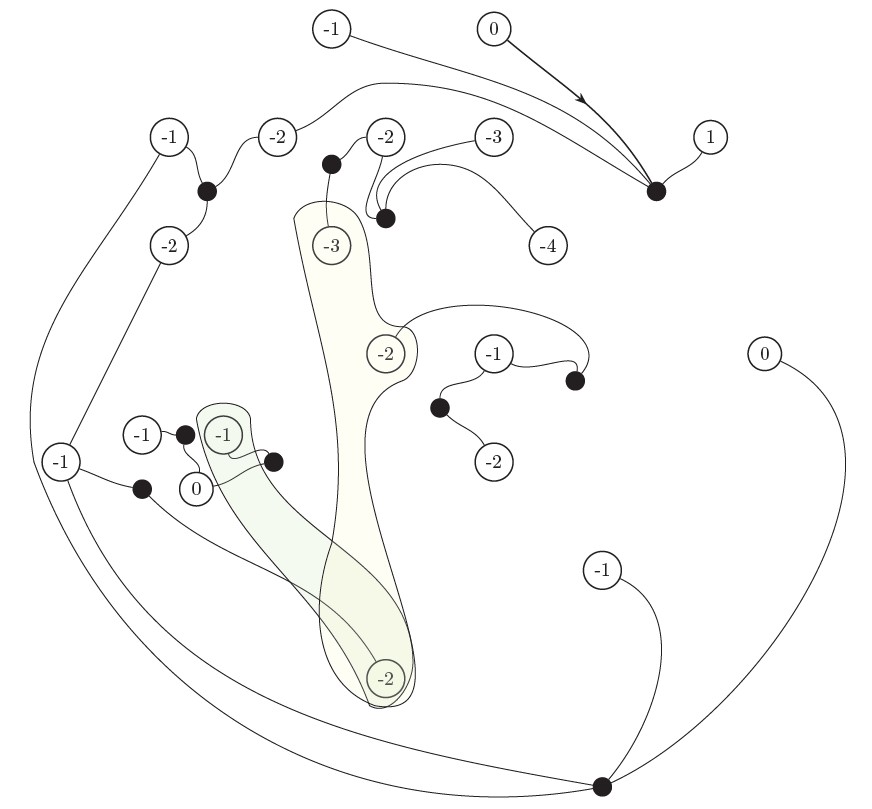}}
	\caption{Stages of the construction of a hypermobile from the  BPS map seen in Figure 1 (b). Sub-figure (a) is a BPS map whose vertices are labelled with the distance of the source and whose edges next to gate vertices are connected by hyperedges. In sub-figure (b), edges are drawn according to the same rules as the BDFG bijection on each component. Sub-figure (c) is the resulting hypermobile after removing all the original edges and branches, and shifting labels.}
\label{fig:bijection p1}
\end{figure}

Consider a mapping $\Psi$ that takes a hypermobile $T \in \mathbb{T}(S;v)$ to a map $\Psi(T)$, constructed as follows:
\begin{enumerate}
    \item For each mobile $m$ in $T$ add a white vertex labelled one less than the minimum label $n_{\text{min}}(m)$ of $m$.
        \item For every corner of a white vertex in $m$,  then connect it to the next vertex in a clockwise direction with $n-1$ in the contour. If this corner is the corner to the left of a root, the new edge is rooted pointing away for the corner. 
       \item Remove all the original edges and black vertices of $m$. Replace each hyperedge with a 2-cell whose boundaries correspond to the hyperedges' vertices. The minimum labelled vertex on the origin mobile is taken to be the origin. Spurious points are taken to be the vertex in a counterclockwise direction along each boundary face of a gate vertex.
   
\end{enumerate}
For an example, see Figure \ref{fig:bijection p2}.

   \begin{figure}[p]\centering
\subfloat[]{\label{a}\includegraphics[width=.45\linewidth]{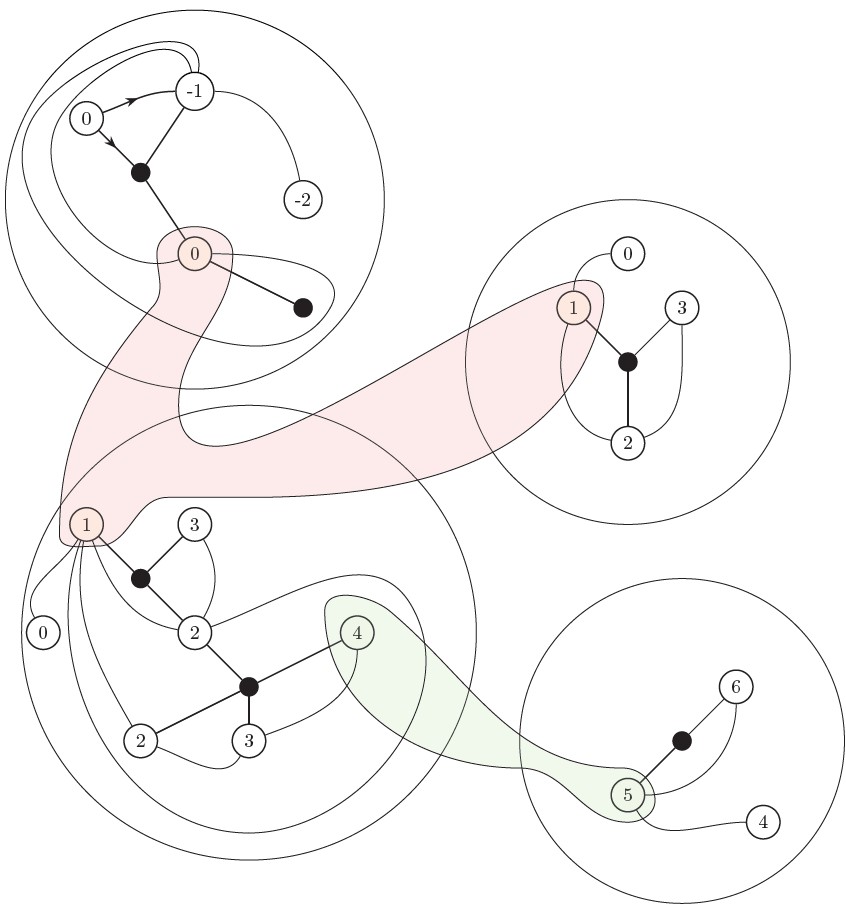}}\hfill
\subfloat[]{\label{b}\includegraphics[width=.45\linewidth]{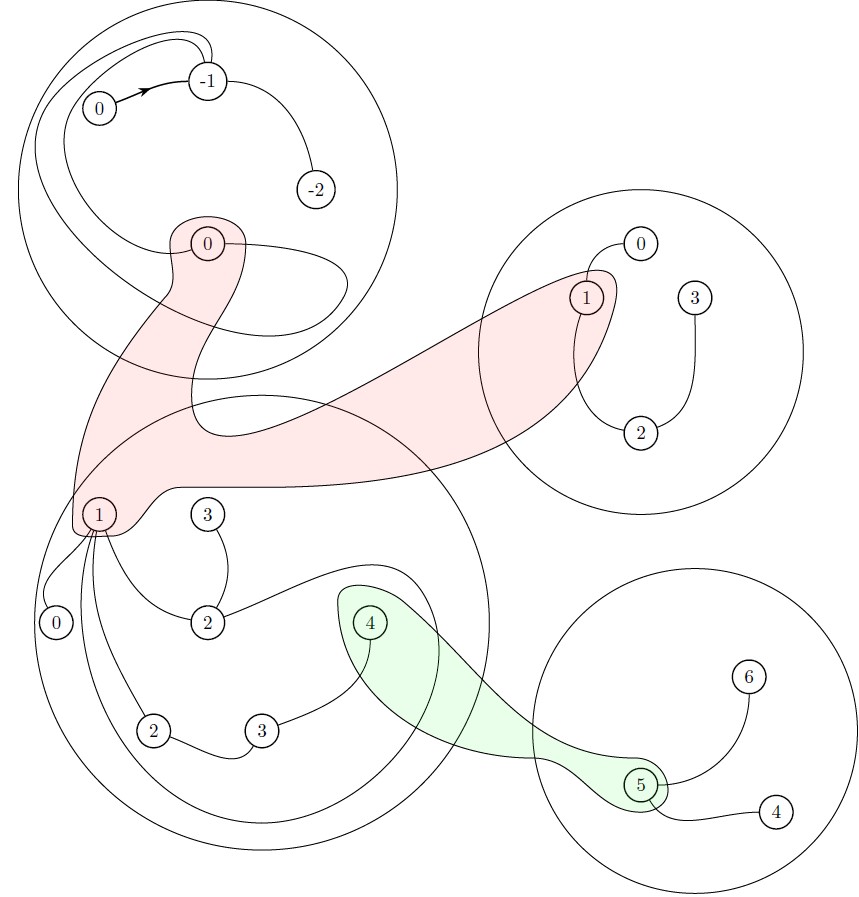}}\par 
\subfloat[]{\label{c}\includegraphics[width=.45\linewidth]{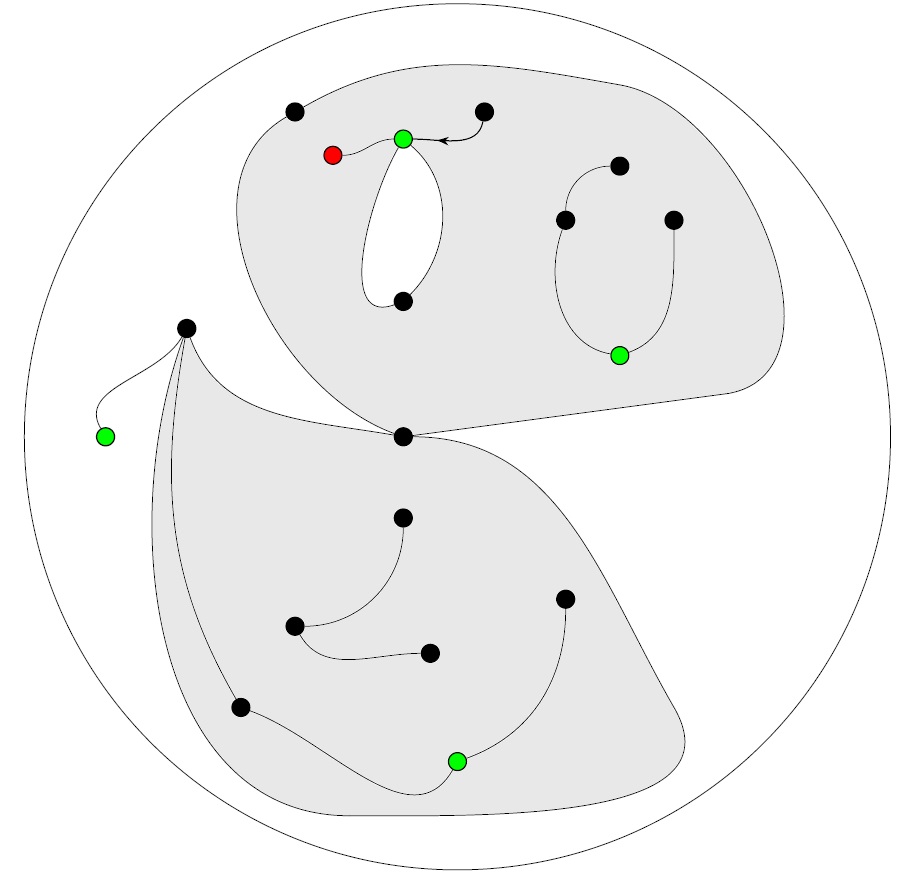}}
\caption{Stages of the construction of a BPS map from the hypermobile seen in Figure 2 (b). Sub-figure (a) and (b) have additional edges and vertices drawn according to the BDFG rules on each mobile. Sub-figure (c) is the resulting BPS map.}
	\label{fig:bijection p2}
\end{figure}

\begin{theorem}\label{thm: bijection}
    The mapping $\Phi$ is equal to  $\Psi^{-1}$ and is a bijection between rooted and pointed BPS maps $\overline{\mathbb{M}}^{\circ}(S;v)$ and  hypermobiles $\mathbb{T}(S;v)$.
\end{theorem}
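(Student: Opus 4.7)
The plan is to reduce the theorem to the Bouttier-Di Francesco-Guitter (BDFG) bijection applied component-by-component, and to verify that the hyperedge structure correctly encodes the gluing of components along multi-boundary 2-cells. Both $\Phi$ and $\Psi$ act locally on each connected component of $M$ (respectively each mobile of $T$), so the BDFG bijection itself handles most of the local correspondence. What remains is to show that the passage between multi-boundary 2-cells and hyperedges, together with the correspondence between spurious points and gate vertices, is a bijection compatible with the local BDFG operations.

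First I would verify that $\Phi(M)$ is a valid hypermobile. The associated hypertree $H_M$ of $M$ (Figure \ref{fig: associated hypertree}) has vertices corresponding to the connected components of $M$, which become the mobiles comprising $\Phi(M)$. Since $H_M$ is a hypertree by the planarity argument already given, the underlying hypergraph of $\Phi(M)$ is again a hypertree. Labelling vertices iteratively along $H_M$ (starting at the pointed component with graph distances from the source, and then propagating through each branch by declaring the spurious point on the child side of a multi-boundary 2-cell to be one more than the corresponding spurious point on the parent side), the BDFG-style edge construction in Step 3 of $\Phi$ yields trees satisfying the clockwise label condition on each component. Step 4 then shifts all labels uniformly so that the minimum label on each non-origin mobile is exactly one more than the label of the gate vertex on the parent side of its incoming hyperedge, which is precisely the hypermobile labelling condition. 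A dual check confirms that $\Psi(T)$ is a valid BMS map: each mobile becomes a connected component via inverse BDFG, and each hyperedge becomes a multi-boundary 2-cell whose boundary lengths and spurious points are dictated by the hyperedge valency and gate vertex positions.

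Next I would establish $\Psi \circ \Phi = \mathrm{id}$ and $\Phi \circ \Psi = \mathrm{id}$. On each connected component the BDFG inverse property recovers the original edges and vertices, so each component of $M$ is reconstructed from its image mobile, and conversely. The multi-boundary 2-cells are recovered from the hyperedges by the very definition of the associated set $S'$: a hyperedge of valency $k$ whose black endpoints have valencies $\ell_1,\dots,\ell_k$ becomes a $k$-boundary 2-cell with boundary lengths $2\ell_1,\dots,2\ell_k$. The spurious-point recovery is the delicate point: Step 2 of $\Phi$ shifts each hyperedge one position clockwise along the boundary face of each spurious point, while Step 3 of $\Psi$ reverses this by placing spurious points one position counterclockwise along the boundary face of each gate vertex, so these two operations are mutually inverse.

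The main obstacle will be verifying that the labelling across hyperedges is internally consistent and invertible when the hypertree of components has large diameter. This I would handle by induction on the hypertree rooted at the origin mobile (respectively the pointed component on the map side), with the inductive step being the labelling propagation rule across a single hyperedge; the rest follows from BDFG on each component. A secondary technical point is that the root and origin conventions are preserved: the rooted edge of $M$ lies on the pointed component whose BDFG image is the origin mobile, and Step 4 of $\Phi$ ensures the source of the root in $\Phi(M)$ is labelled zero, matching the hypermobile root convention.
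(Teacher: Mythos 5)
Your proposal is correct and follows essentially the same route as the paper: component-wise reduction to the BDFG bijection, the branch--hyperedge correspondence, and the observation that the clockwise shift of hyperedges to gate vertices in $\Phi$ is undone by the counterclockwise placement of spurious points in $\Psi$. The only notable difference is that you obtain the hypertree property of $\Phi(M)$ by transporting it from the associated hypertree $H_{M}$, whereas the paper proves it directly via a host-graph argument (acyclicity from planarity plus an Euler-characteristic count for connectivity) and, symmetrically, verifies planarity of $\Psi(T)$ by computing $\chi(\Psi(T))=2m(T)$ --- a genus-zero check that your ``dual check'' leaves implicit.
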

\begin{proof}

 First, we will show that $\Phi$ maps into  $\mathbb{T}(S;v)$. Let $M \in \overline{\mathbb{M}}^{\circ}(S;v)$. It is clear that $\Phi(M)$ is a labelled hypergraph. On each  component one has applied the BDFG bijection, so  they correspond to mobiles. The resulting graph is a collection of mobiles connected by the occasional hyperedges. Consider a host graph $G$ formed from $\Phi(M)$ by replacing each hyperedge with a linear sequence of $n-1$ red edges connecting all black vertices in that hyperedge. If we can show that $G$ is a tree, then this suffices to show that $\Phi(M)$ is a hypertree.  Suppose for purposes of contradiction that $G$ contains a loop. No loop can be formed with just ordinary edges since this would contradict that one of the mobiles is a tree. Thus, such a loop must contain at least one red edge. However, any such sequence of edges that  forms a cycle would have to come from a sequence of branches and components that topologically forms a handle in $M$, which contradicts the planarity of $M$. Thus, $G$ has no cycles. 

What remains is to show that $G$ is connected.   By construction, recall that there are $F(M)$ many black vertices, and $V(M)-K(M)$ many white vertices. The number of edges between white and black vertices  is $E(M)$, since for each existing edge precisely one edge is created. The number of red edges is $\sum_{\ell}|\ell|$, where the sum is over all distinct linear sequences $\ell$ of red edges in $G$. Note that any such sequence corresponds to the number of boundaries of the corresponding branch in $M$ minus one, i.e. 
\begin{equation*}
    \sum_{\ell}|\ell| = \sum_{i=2}(i-1)B_{i}(M),
\end{equation*}
where $B_{i}(M)$ denotes the number of branches in $M$ with $i$ boundaries. Recall from Section \ref{sec: planar stuffed maps} the associated hypertree to $M$ and note that it must have a host tree. As a tree, this implies that by its construction $B(M) = K(M) -1$ and the degree sum formula can be written as $\sum_{i=2} iB_{i}(M) = 2B(M)$. Putting this all together:
\begin{align*}
    V(G) - E(G) &= (F(M) + V(M) - K(M)) -(E(M) +\sum_{i=2}(i-1)B_{i}(M))\\
    &= F(M) + V(M) - E(M) -K(M) -\sum_{i=2}(i-1)B_{i}(M))\\
    &= K(M) -\sum_{i=2}(i-1)B_{i}(M))\\
    &= K(M)+B(M) -\sum_{i=2}iB_{i}(M)) \\
    &= K(M) - B(M)\\
    &=1.
\end{align*}
Hence, $G$ is connected and therefore $\Phi(M)$ is a hypertree in $\mathbb{T}(S;v)$.

Second, we will show that $\Psi$ maps into  $\overline{\mathbb{M}}(S;v)$. It follows from the BDFG bijection that each mobile in $T$ is a bipartite path-connected ordinary map in the image of $\Psi$. The last step of the construction just adds branches between these ordinary maps making them components of a bipartite stuffed map. If $\Psi(T)$ was topologically disconnected, then this would imply that $T$ is disconnected. It remains to be shown that $\Psi (T)$ is planar. By construction the number of vertices of $\Psi (T)$ is equal to the number of white vertices of $T$ plus the number of mobiles $V_{w}(T) + m(T)$, the number of edges is the same, and the number of faces is the number of black vertices $V_{b}(T)$. Thus, one can write
\begin{align*}
    \chi (\Psi(T)) &= V(\Psi(T)) - E(\Psi(T)) + F(\Psi(T)) \\ 
    &= V_{w}(T) + m(T) - E(T) + V_{b}(T)\\
    & = 2 m(T).
\end{align*}
The number of path-connected components of $T$ is $m(T)$, and since $\chi (\Psi(T)) = 2 K(\Psi(T)) -2g(\Psi(T))$, we can conclude that $\Psi(T)$ is planar.

Lastly, it needs to be shown that $\Phi = \Psi^{-1}$. It follows from the BDFG bijection that components correspond to mobiles, and that roots and distinct white vertices will be preserved under $\Psi\circ \Phi$ and $\Phi\circ \Psi$. Since branches correspond to hyperedges directly in both constructions it is clear that their structure is maintained. Furthermore, in step 3 of the construction  of $\Phi(M)$, the choice of spurious point is inverse to the choice of gate vertices in step 2 of the construction of $\Psi(M)$.
\end{proof}

\section{Stuffed map enumeration}
\subsection{Generating functions}

With the goal of constructing generating functions for stuffed maps, to each element of $S$ with boundaries of lengths $\ell_{1},...,\ell_{k}$ we assign a weight $t_{\ell_{1},...,\ell_{k}}$ and we refer to the weights collectively as $\mathbf{t}$. Note that there is no order of indices for each weight, since 2-cells have no order of boundaries. We also assign each vertex a weight $t$ that is a redundant parameter, but is included for the convenience of computations with generating functions. We define the generating function of stuffed maps glued from elements of $S$ as 
\begin{equation}
	\mathcal{T}_{\ell}(t,\mathbf{t}) = \sum_{v=1}^{\infty} t^{v} \sum_{M\in  \mathbb{M}_{\ell}(S;v)} \prod_{k=1}^{d}\prod_{\ell_{1},...\ell_{k}= 1} \frac{t_{\ell_{1},...\ell_{k}}^{n_{\ell_{1},...\ell_{k}}(M)}}{|\text{Aut}(M)|},
\end{equation}
where $n_{\ell_{1},...\ell_{k}}(M)$ is the number of 2-cells with boundaries of lengths $\ell_{1},...\ell_{k}$ used to construct a given map $M$. Throughout this paper, the choice of set $S$ is clear from context. When the generating function only enumerates ordinary maps, we will denote it $T_{\ell}(t,\mathbf{t})$. In general, the various generating functions of the stuffed maps will be calligraphic versions of the generating functions of ordinary maps. Additionally, to avoid cumbersome expressions, we will drop the weights in the brackets in the generating function when they are not immediately relevant.

From generalizing the well-known Euler characteristic argument (see Theorem 1.2.1 of \cite{eynard2016counting} for example) we will show that $\mathbb{M}_{\ell}(S;v)$ is a finite set, implying that $\mathcal{T}_{\ell}(t,\mathbf{t})$ is a well-defined formal series. 
\begin{prop}\label{prop: finite}
       Let $S$ be a (not necessarily finite) set of 2-cells such that for any element the sum of the lengths of its boundaries is at least three.  Then the set of stuffed maps $\mathbb{M}_{\ell}(S,v)$ is finite and $\mathcal{T}_{\ell}$ is a well-defined formal power series in $t$ with polynomial coefficients, i.e. $\mathcal{T}_{\ell}\in \mathbb{Q}[\mathbf{t}][[t]]$.
\end{prop}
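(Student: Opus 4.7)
The plan is to show that the Euler characteristic together with the hypothesis on $S$ forces a uniform upper bound on the number of edges $E$ of any map in $\mathbb{M}_{\ell}(S;v)$ in terms of $v$ and $\ell$. Once $E$ is bounded, so are the number of $2$-cells used and their possible perimeters, and then the number of gluings of a finite collection of polygonal sides is automatically finite. This strategy is the natural generalization to the stuffed setting of the standard argument for ordinary maps whose faces all have degree at least three.

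Concretely, I would introduce $B_1$ for the number of disc $2$-cells (polygons) of $S$ used, and $B_k$ for $k\ge 2$ for the number of $k$-boundary $2$-cells of $S$ used, with $B=\sum_{k\ge 2}B_k$. Section 2 already records $K-1=\sum_{k\ge 2}(k-1)B_k$, where $K$ is the number of connected components of the underlying graph. Using that a disc $2$-cell contributes $1$ and a multi-boundary $2$-cell with $k$ boundaries contributes $2-k$ to the Euler characteristic $\chi(S^{2})=2$, and counting the distinguished boundary of length $\ell$ as an extra disc, I obtain
\[
E \;=\; v-1+B_{1}-\sum_{k\ge 2}(k-2)\,B_{k}.
\]
Next I invoke the perimeter identity $2E=\ell+L$, where $L$ denotes the total perimeter of all $2$-cells of $S$ used (each edge of the gasket lies on two boundary sides). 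The hypothesis on $S$ gives $L\ge 3(B_{1}+B)$. Substituting $B_{1}$ from the Euler equation above and simplifying, the cancellation $\sum_{k\ge 2}\bigl[(k-2)+1\bigr]B_{k}=K-1\ge 0$ produces the uniform bound
\[
E \;\le\; 3v-3-\ell.
\]

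With $E$ bounded, $L=2E-\ell$ is bounded, so the total number of $2$-cells from $S$ used is bounded, and each such $2$-cell has total perimeter at most $L$, leaving only finitely many admissible boundary-length tuples. Since there are only finitely many ways to glue a bounded collection of half-edges respecting orientation, $\mathbb{M}_{\ell}(S;v)$ is finite. Consequently, the coefficient of $t^{v}$ in $\mathcal{T}_{\ell}(t,\mathbf{t})$ is a finite $\mathbb{Q}$-linear combination of monomials in the finitely many variables $t_{\ell_{1},\dots,\ell_{k}}$ with $\ell_{1}+\dots+\ell_{k}\le 6v-6-3\ell$, hence a polynomial in $\mathbf{t}$, so $\mathcal{T}_{\ell}\in\mathbb{Q}[\mathbf{t}][[t]]$. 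The only real subtlety, and the step I would write out most carefully, is the Euler bookkeeping across the three kinds of $2$-cells (the distinguished $\ell$-face, polygons from $S$, and multi-boundary cells from $S$) and the fact that $\chi$ of a multi-boundary cell is $2-k$ rather than $1$; once that is set up properly, the rest is a one-line manipulation.
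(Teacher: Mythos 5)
Your proposal is correct and is essentially the paper's argument: both proofs combine an Euler-characteristic identity for the stuffed map with the hypothesis that every $2$-cell has total perimeter at least three to force a uniform bound on the size of any map in $\mathbb{M}_{\ell}(S;v)$, after which finiteness of the gluings and $\mathcal{T}_{\ell}\in\mathbb{Q}[\mathbf{t}][[t]]$ are immediate. The only difference is bookkeeping: the paper writes the Euler relation as $2K(M)=V-E+F$ with each boundary of each $2$-cell counted as a face and bounds the number of $2$-cells directly (using $K\ge 1$), whereas you work from $\chi(S^{2})=2$ with contributions $2-k$ per $k$-boundary cell and bound $E$ first; the two accountings agree precisely because $K-1=\sum_{k\ge 2}(k-1)B_{k}$, and in fact you only need the (trivial) nonnegativity of that sum.
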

\begin{proof}
    Let $M \in \mathbb{M}_{\ell}(S;v)$. The Euler characteristic of $M$  is equal to 
    \begin{equation}\label{eq:Euler}
        2K(M) =  V(M) - E(M) +F(M),
    \end{equation}
    where $K(M)$ is the number of connected components of $M$, $V(M)$ is the number of vertices, $E(M)$ the number of edges, and $F(M)$ the number of faces. The number of faces is one boundary of length $\ell$ plus the boundaries of each 2-cell: 
\begin{equation}\label{eq:faces}
    F(M) = 1 +\sum_{k=1}^{d}k\,\sum_{\substack{\ell_{1},\ell_{2},...,\ell_{k}=1\\ \ell_{1}+\ell_{2}+...+\ell_{k}\geq 3}}  n_{\ell_{1},...,\ell_{k}}(M), 
\end{equation}
where $ n_{\ell_{1},...,\ell_{k}}(M)$ denotes the number of 2-cells with boundary lengths of $\ell_{1},...,\ell_{k}$ used to construct $M$. The number of edges in $M$ is half the sum of the number of half-edges used to construct it, i.e. half the number of edges of faces: 
\begin{equation}\label{eq:edges}
     E(M) = \frac{1}{2}\left(\ell + \sum_{k=1}^{d}k\,\sum_{\substack{\ell_{1},\ell_{2},...,\ell_{k}=1\\ \ell_{1}+\ell_{2}+...+\ell_{k}\geq 3}} (\ell_{1}+...+\ell_{k}) n_{\ell_{1},...,\ell_{k}}(M)\right).
\end{equation}
Applying equations \eqref{eq:faces} and \eqref{eq:edges} to \eqref{eq:Euler} we find that
    \begin{align*}
        V(M)-2K(M)+1-\frac{\ell}{2}& = \frac{1}{2}\left(\sum_{k=1}^{d}k\,\sum_{\substack{\ell_{1},\ell_{2},...,\ell_{k}=1\\ \ell_{1}+\ell_{2}+...+\ell_{k}\geq 3}} (\ell_{1}+...+\ell_{k}-2) n_{\ell_{1},...,\ell_{k}}(M)\right)\\
        & \geq \frac{1}{2}\left(\sum_{k=1}^{d}k\,\sum_{\ell_{1},\ell_{2},...,\ell_{k}=1}  n_{\ell_{1},...,\ell_{k}}(M)\right).\\
    \end{align*}

     By the definition of the set $\mathbb{M}_{\ell}(S,v)$ we have that the boundary length $\ell$ and the number of vertices $V(M) = v$ is fixed, and that $K(M) \geq 1$, hence the left-hand side of the above inequality is bounded. Hence, the number of faces used to glue $M$  is bounded, and thus the number of possible maps $M\in \mathbb{M}_{\ell}(S,v)$ is finite. This with the definition of $\mathcal{T}_{\ell}$ implies $\mathcal{T}_{\ell}\in \mathbb{Q}[\mathbf{t}][[t]]$.
\end{proof}

It is also worth noting that when $\ell$ is odd, then $\mathcal{T}_{\ell} =0$. This follows from the fact that no matter what even 2-cells belong to the set $S$, with an odd boundary to glue, there will always be an odd number of half-edges, and hence no such gluing exists.

\begin{remark}\label{remark: BPS}
    Let $\overline{\mathcal{T}}_{\ell}(t,\mathbf{t})$ denote the generating function above but for BPS maps.  The difference between the generating functions of stuffed maps and branch-rooted stuffed maps is a symmetry factor of $\ell_{1} \cdots \ell_{k}$ for every 2-cell with boundaries of lengths $\ell_{1},...,\ell_{k}$ and a factor of $t$  for each boundary, i.e.
\begin{align*}
\mathcal{T}_{\ell}\left(t,...,\frac{t_{\ell_{1},...\ell_{k}}}{t^{k}\ell_{1}\cdots\ell_{k}},...\right)&=  \sum_{v=1}^{\infty} t^{v} \sum_{M \in  \mathbb{M}_{\ell}(S;v)} \frac{1}{|\text{Aut}(M)|}\prod_{i=1} t_{i}^{n_{i}(M)}\prod_{k=2}\prod_{\ell_{1},...\ell_{k}= 1} \frac{t_{\ell_{1},...\ell_{k}}^{n_{\ell_{1},...\ell_{k}}(M)}}{t^{k} \ell_{1} \cdots \ell_{k}}\\
&= \overline{\mathcal{T}}_{\ell}(t,\mathbf{t}).
\end{align*}
This factor comes from the choice of root for each boundary of a branch. This means that since one can enumerate planar stuffed maps, one can equivalently enumerate branch-rooted stuffed maps via this rescaling of generating functions.  

Note that the automorphism group of any BPS map is trivial. This follows from the fact that every connected component of a BPS map will have at least one distinguished point. By the argument in Proposition \ref{prop: finite}, one can see that the generating functions of BPS maps have integer coefficients, that is, $\overline{\mathcal{T}}_{\ell}(t,\mathbf{t}) \in \mathbb{N}[\mathbf{t}][[t]]$.
    
\end{remark}

 \subsection{A review of ordinary  map enumeration}
Let's very briefly review the standard methods used to enumerate ordinary maps, whose techniques will be useful when enumerating stuffed maps. Tutte originally constructed a recursive bijection for ordinary maps by removing the rooted edge of a boundary and reassigning new root(s) \cite{tutte1968enumeration}. The resulting recursive formula for generating functions is 
\begin{equation}\label{eq:Tutte}
    T_{\ell} = \sum_{i=0}^{\ell -2}T_{\ell -2 +i} T_{i} + \sum_{j=3}t_{j} T_{\ell -2 + j}.
\end{equation}
 For a detailed review, we refer the reader to Chapter 2 of \cite{chapuy2014introduction}. Note that when all weights are zero, the recursion is for planar rooted one-faced maps, the solution of which is the Catalan numbers, i.e. the number of  unicellular maps.  By defining a generating function of $T_{\ell}$'s, one can then write the above equations as a quadratic equation. We define the formal generating function of maps with one boundary as
\begin{equation*}
    W(x) = \sum_{\ell=1}^{\infty}\frac{T_{\ell}}{x^{\ell+1}}.
\end{equation*}
Then by multiplying equation \eqref{eq:Tutte} by $\frac{1}{x^{\ell+1}}$ and summing we can write it as 
\begin{equation}
    0=W(x)^2 - V(x) W(x) + R(x),
\end{equation}
where 
\begin{equation*}
    V(x) = x - \sum_{i=3} t_{i} x^{i-1}
\end{equation*}
and 
\begin{equation*}
    R(x) = t - \sum_{q =0} x^{q} \sum_{i = q+2} t_{i} T_{i-q-2}.
\end{equation*}
The solution to this quadratic relation is of the form
\begin{equation*}
    W(x) = \frac{1}{2}\left( V(x) \pm \sqrt{V(x)^2 - 4 R(x)}\right).
\end{equation*}
By the requirement that $\left[\frac{1}{x}\right]W(x) = t$, the choice of sign in front of the square root can be deduced to be minus. 

Now assume that the number of 2-cells used to construct maps is finite, i.e. $|S|<\infty$, and that all 2-cells have boundaries of even length. Both $V$ and $R$ are polynomials in $x$, and by a theorem of Brown \cite{brown1965existence} the discriminant is a polynomial that factorizes as 
\begin{equation*}
 V(x)^2 - 4 R(x) = M(x)^2 (x^2-4\gamma^2),
\end{equation*}
where $\gamma(t,\mathbf{t})^2 \in \mathbb{Q}[\mathbf{t}][[t]]$ and $M(x)\in \mathbb{Q}[x,\mathbf{t}][[t]]$. This result  implies that 
\begin{equation*}
    W(x)=\frac{1}{2}\left( V(x) - M(x)\sqrt{x^2 - 4\gamma^2}\right).
\end{equation*}

If one can compute the polynomial $M(x)$ and formal series $\gamma$, then they can enumerate all maps in $\mathbb{M}_{\ell}(S,v)$ for any $\ell,v\geq 0$. In fact, these quantities can be explicitly determined in terms of weights. The idea goes roughly as follows. As a complex function, $W(x)$ is easily defined on $\mathbb{C}\setminus [-\gamma,\gamma]$. The Zhukovsky transform 
\begin{equation*}
    x(z) = \gamma\left(z+\frac{1}{z} \right)
\end{equation*}
conformally maps  $\mathbb{C}\setminus [-\gamma,\gamma]$ to the exterior of the unit disk. The inverse transform is of the form
\begin{equation*}
    z(x) = \frac{1}{2\gamma}\left(x +\sqrt{x^2 - 4\gamma^2}\right).
\end{equation*}
The generating function $W(x)$ is much more simple to solve for in Zhukovsky space. For details, we refer the reader to Chapter 3.1 of \cite{eynard2016counting}.

Using this transformation one can show that the formal series $\gamma$ is the root of a polynomial equation
\begin{equation*}
    \gamma^2 -t = \sum_{n=2}\frac{(2n -1)!}{n! (n-1)!}t_{2n} \gamma^{2n}
\end{equation*}
equivalent to what is known as the \textit{tree equation}. It is called such because it is often derived by bijective methods involving decorated trees \cite{cori1981planar,bouttier2004planar}. From this equation, many of the statistical properties and the critical behaviour of random maps can be studied \cite{bouttier2003geodesic,marckert2007invariance}. Consider a map enumerated by $T^{\circ}_{2}$. If one zips closed the 2-gon boundary, the result is a map with no boundary but one rooted edge. Hence $ T^{\circ}_{2}-t$ is the generating of rooted pointed maps, and via the above formula $2 \gamma^2$ is also. The generating function $\gamma^2$ enumerates pointed maps with a distinguished edge. Via the BDFG bijection, this is the generating function of mobiles.

Further, all generating functions $T_{2\ell}$ can be computed in terms of $\gamma$  by expanding $T_{2\ell} =- \text{Res}_{x \rightarrow \infty} W(x) x^{2\ell}dx$ in Zhukovsky coordinates. The result found in \cite{eynard2016counting} is  
\begin{equation*}
    T_{2\ell} = \gamma^{2 \ell+1} \sum_{i=1}\frac{(2i-1)(2\ell)! u_{2i-1}}{(\ell-j+1)!(\ell+j)!},
\end{equation*}
where 
$u_{1} = \frac{t}{\gamma}$ and 
\begin{equation*}
    u_{2j-1}=-\sum_{j= 2k-1}t_{2j-2k+2}\frac{(2j -2k+1)!}{j!(j-2k+1)!} \gamma^{2j -2k+1}
\end{equation*} 
for $j \geq 2$. 

\subsection{Gasket decomposition}
As derived by Borot in \cite{borot2014formal}, planar stuffed maps have a nested structure, which can be seen in the form of a functional relation which we will now briefly review. Recall the gasket  $M'$ of a rooted stuffed map $M$ is the  rooted ordinary map obtained from $M$ by removing all branches and only keeping the connected component with the root. The resulting map will have some faces that come from 2-cells with one boundary and some that come from multi-boundary 2-cells.  In the latter case, we call the faces cement faces. Since $M$ is planar, the cement faces must all be from distinct 2-cells, otherwise such a 2-cell would create a handle in $M$. The original stuffed map $M$ can be reconstructed from the gasket $M'$ by replacing all the cement faces with their original 2-cells. At the level of generating functions, this corresponds to replacing the weights of the cement faces with a weighted combination of $\mathcal{T}_{\ell}$.

Given a generating function $\mathcal{T}_{\ell}$, let $T_{\ell}$ be the corresponding generating function of maps glued only from polygons.  The above decomposition leads to the following functional relationship 
\begin{equation}\label{eq:functional}
	\mathcal{T}_{\ell}(t,\mathbf{t}) = T_{\ell} (t,\mathbf{C}),
\end{equation}
where $\mathbf{C}$ denotes the sequence of weights
\begin{equation}\label{eq:functional weights}
	C_{i}(t,\mathbf{t}) = t_{i} -\sum_{k =2} \frac{1}{(k-1)!}\sum_{i_{2},...,i_{k}}t_{i,i_{1},...,i_{k}} \prod_{j=2}^{k}\frac{\mathcal{T}_{i_{j}}(t,\mathbf{t}) }{i_{j}}.
\end{equation}
This functional equation will be of use in the following sections.

\subsection{Tutte's recursion for stuffed maps}
Planar stuffed maps with a boundary are known to satisfy a generalized recursive equation derived in a  similar manner to ordinary maps. Let $S$ be the set of all possible unmarked 2-cells, then 
    \begin{align*}
    \mathcal{T}_{\ell} = \sum^{\ell-2}_{i=0} \mathcal{T}_{\ell-2 +i}\mathcal{T}_{i} +\sum_{k=1} k\sum_{\substack{\ell_{1},\ell_{2},...,\ell_{k}=1\\ \ell_{1}+\ell_{2}+...+\ell_{k}\geq 3}}\frac{t_{\ell_{1},\ell_{2},..,\ell_{k}}}{\ell_{2},..,\ell_{k}} \mathcal{T}_{\ell-2 + \ell_{1}} \prod_{r=2}^{k} \mathcal{T}_{\ell_{r}}.  
\end{align*}
This recursive formula can be obtained by applying the functional relation \eqref{eq:functional} \eqref{eq:functional weights} to Tutte's recursion for ordinary maps \eqref{eq:Tutte}. See the beginning of Section 4.1 of \cite{borot2014formal} for a graphical derivation. 

Just as with ordinary maps one can define the generating function
\begin{equation*}
    \mathcal{W}(x) = \sum_{\ell=1}^{\infty}\frac{\mathcal{T}_{\ell}}{x^{\ell+1}},
\end{equation*}
and obtain a quadratic equation to solve
\begin{equation}
    0=\mathcal{W}(x)^2 - \mathcal{V}(x) \mathcal{W}(x) + \mathcal{R}(x),
\end{equation}
where 
\begin{equation*}
    \mathcal{V}(x) = x - \sum_{i=2} \left(t_{i} -\sum_{k =2} \frac{1}{(k-1)!}\sum_{i_{2},...,i_{k}}t_{i,i_{1},...,i_{k}} \prod_{j=2}^{k}\frac{\mathcal{T}_{i_{j}} }{i_{j}}\right) x^{j-1}
\end{equation*}
and 
\begin{equation*}
    \mathcal{R}(x) = t - \sum_{q =0} x^{q} \sum_{i = q+2} \left(t_{i-q-2} -\sum_{k =2} \frac{1}{(k-1)!}\sum_{i_{2},...,i_{k}}t_{i-q-2,i_{1},...,i_{k}} \prod_{j=2}^{k}\frac{\mathcal{T}_{i_{j}} }{i_{j}}\right)  .
\end{equation*}

Just as before the solution is of the form
\begin{equation*}
    \mathcal{W}(x) = \frac{1}{2}\left( \mathcal{V}(x) - \sqrt{\mathcal{V}(x)^2 - 4 \mathcal{R}(x)}\right).
\end{equation*}
As noted in \cite{borot2014formal}, the functional equation combined with the factorization of the discriminant in the case of ordinary maps implies the same type of factorization for stuffed maps. Once again, the Zhukovsky method can be used to solve for an explicit solution. However, the problem is more complicated to solve in practice due to the appearance of the generating functions $\mathcal{T}_{\ell}'s$ in $\mathcal{V}(x)$. For an example of such a solution, see \cite{hessam2023double}.

\subsection{The tree equation for stuffed maps with pointed gaskets}
In bijections between maps and mobiles, a distinguished vertex is required to label vertices with graph distances from said point.   Recall that a map with a distinguished vertex is referred to as pointed, and such a vertex does not receive a weight $t$. Given any generating series of stuffed maps the corresponding generating function of pointed maps is found by formally differentiating with respect to $t$. Using properties of the Zhukovsky transform, one can show the relationship between the generating functions of pointed ordinary maps and $\gamma$:
  \begin{equation}
      T^{\circ}_{2\ell} =  {2 \ell \choose \ell} \gamma^{2 \ell}.
  \end{equation}
As discussed earlier, $T^{\circ}_{2} = 2\gamma^{2}$, the generating function of pointed maps or mobiles. Thus, via the functional relationship, $2\gamma^{2}(t,\mathbf{C})$ is the generating function of pointed rooted stuffed maps whose gasket is pointed. If we rescale and look at BPS maps, via the bijection of Theorem \ref{thm: bijection}, we can say that $2\gamma^{2}(t,\mathbf{C})$ is the generating function of hypermobiles whose origin mobile is rooted.

We will now apply the tree equation for ordinary maps and the functional equation \ref{eq:functional pointed main} to show that $\gamma^2$ for stuffed maps is the root of an equation.
\begin{prop}[Tree equation]\label{prop:tree}
    The generating function $2\gamma^2$ satisfies the following equation:
\end{prop}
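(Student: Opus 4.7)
The plan is to exploit the gasket-decomposition functional equation $\mathcal{T}_\ell(t,\mathbf{t}) = T_\ell(t,\mathbf{C})$ and the fact, recalled just before the statement, that the generating function $2\gamma^2$ of pointed rooted maps for the stuffed theory equals $2\gamma^2(t,\mathbf{C})$, where $\mathbf{C}$ is the sequence of ``cement weights'' from \eqref{eq:functional weights}. Since for ordinary maps the generating function $\gamma^2=\gamma(t,\mathbf{t})^2$ is the (unique formal power series) root of the classical tree equation
\begin{equation*}
    \gamma^2 - t \;=\; \sum_{n\geq 2}\frac{(2n-1)!}{n!(n-1)!}\, t_{2n}\, \gamma^{2n},
\end{equation*}
the candidate equation for the stuffed case is obtained simply by replacing each $t_{2n}$ with the corresponding cement weight $C_{2n}(t,\mathbf{t})$ from \eqref{eq:functional weights}, giving
\begin{equation*}
    \gamma^2 - t \;=\; \sum_{n\geq 2}\frac{(2n-1)!}{n!(n-1)!}\,
    \Biggl( t_{2n} - \sum_{k\geq 2}\frac{1}{(k-1)!}\sum_{i_2,\ldots,i_k}
    t_{2n,i_2,\ldots,i_k}\prod_{j=2}^{k}\frac{\mathcal{T}_{i_j}}{i_j}\Biggr)\gamma^{2n}.
\end{equation*}

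The proof would proceed in three steps. First, substitute $\mathbf{t}\mapsto\mathbf{C}$ into the ordinary tree equation; as both sides lie in $\mathbb{Q}[\mathbf{t}][[t]]$ by Proposition \ref{prop: finite} and the $C_i$ are formal power series with vanishing constant term in $t$ (so the substitution is well defined as a formal operation), we obtain an equation for $\gamma(t,\mathbf{C})^2$ of exactly the displayed shape. Second, invoke the identification $\mathcal{T}_\ell(t,\mathbf{t}) = T_\ell(t,\mathbf{C})$ from \eqref{eq:functional} to conclude that the $\gamma^2$ appearing implicitly on both sides is the same series, namely the one enumerating pointed rooted stuffed maps with pointed gasket (equivalently, by Theorem \ref{thm: bijection}, hypermobiles with rooted origin mobile). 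Third, verify that this candidate equation indeed admits $\gamma^2(t,\mathbf{C})$ as its unique solution in $\mathbb{Q}[\mathbf{t}][[t]]$ with no constant term, which follows from the contraction-mapping style argument that governs the ordinary tree equation: reading the equation modulo increasing powers of $t$, each coefficient of $\gamma^2$ is determined uniquely from lower-order data together with the $\mathcal{T}_{i_j}$'s (which are themselves determined inductively).

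The main technical obstacle I anticipate is bookkeeping rather than conceptual: one must check that the substitution $t_{2n}\mapsto C_{2n}$ commutes with the formal-series operations used to derive the ordinary tree equation (in particular with the Zhukovsky-variable manipulations summarized in the review subsection), and that only even boundary weights enter because the bipartite condition forces the odd $\mathcal{T}_{\ell}$ to vanish. Concretely, in the inner sum one should restrict the indices $i_2,\ldots,i_k$ to even integers, since $\mathcal{T}_{i_j}/i_j$ vanishes otherwise; this is automatic but worth noting. Once this substitution is justified, the equation follows immediately by functoriality of the ordinary tree equation under the reparametrization $\mathbf{t}\mapsto\mathbf{C}$, with no further combinatorics required.
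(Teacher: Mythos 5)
Your approach is the same as the paper's: both start from the gasket-decomposition relation \eqref{eq:functional}, substitute the cement weights $C_{2n}$ of \eqref{eq:functional weights} into the ordinary tree equation, and identify the resulting $\gamma^2(t,\mathbf{C})$ as the series enumerating stuffed maps with pointed rooted gasket. The extra care you take about well-definedness of the substitution and uniqueness of the formal-series solution goes beyond what the paper records and is harmless. The one substantive omission is the final step that actually produces the displayed form of the proposition: the equation you arrive at still contains the stuffed-map generating functions $\mathcal{T}_{i_j}$, whereas the stated equation is written purely in terms of $\gamma$ and the Boltzmann weights. Closing that gap requires the further substitution of \eqref{eq:moments in terms of gamma}, i.e. $T_{2\ell}=\gamma^{2\ell+1}\sum_{j}\frac{(2\ell)!\,(2j-1)!}{(\ell+j)!\,(\ell-j+1)!}\,u_{2j-1}$ evaluated at the cement weights via \eqref{eq:functional}, which is exactly what generates the factors $S_{p}$ in the statement. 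This last step is mechanical bookkeeping, but it is part of what is being asserted, so as written your argument establishes an equivalent implicit equation (the intermediate display in the paper's own proof) rather than the equation displayed in Proposition \ref{prop:tree}.
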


\begin{align*}
    \gamma^2 -t &=  \sum_{n=1}\frac{(2n -1)!}{n! (n-1)!}t_{2n} \gamma^{2n} \\
    &+  \sum_{n=1}\frac{(2n -1)!}{n! (n-1)!}\gamma^{2n}\sum_{k =1} \frac{1}{(k-1)!}\sum_{i_{2},...,i_{k}}t_{2n,i_{1},...,i_{k}} \prod_{p=2}^{k}\frac{\gamma^{i_{p} +1}}{i_{p}}  S_{p}
\end{align*}
where 
\begin{equation*}
   S_{j} :=  \sum^{d/2}_{j=1}\frac{(i_{p})! (2j-1)!}{(i_{p}/2+j)! (i_{p}/2 -j+1)!} \sum_{q= 2k-1}t_{2q-2k+2}\frac{(2q -2k+1)!}{q!(q-2k+1)!} \gamma^{2q -2k+1}.
\end{equation*}
\begin{proof}
   Start by using Borot's functional equation and substituting in $C_{i}$ to the tree equation for ordinary maps:
\begin{align*}
	&\gamma^{2} -t \\
    &= \sum_{n=1}\frac{(2n -1)!}{n! (n-1)!}  C_{2n}\gamma^{2n}\\
    &= \sum_{n=1}\frac{(2n -1)!}{n! (n-1)!}  \left( t_{2n} -\sum_{k =1} \frac{1}{(k-1)!}\sum_{i_{2},...,i_{k}}t_{2n,i_{1},...,i_{k}} \prod_{j=2}^{k}\frac{\mathcal{T}_{i_{j}} }{i_{j}}\right)\gamma^{2n}.
\end{align*}
Note that one may write each generating function for ordinary maps  strictly in terms of $\gamma$ via Corollary 3.1.2 of \cite{eynard2016counting} which states 
\begin{equation}\label{eq:moments in terms of gamma}
    T_{2\ell} =  \gamma^{2\ell +1} \sum^{d/2}_{j=1}\frac{(2 \ell)! (2j-1)!}{(\ell+j)! (\ell -j+1)!} u_{2j-1},
\end{equation}
where 
$u_{1} = \frac{t}{\gamma}$ and 
\begin{equation*}
    u_{2j-1}=-\sum_{j= 2k-1}t_{2j-2k+2}\frac{(2j -2k+1)!}{j!(j-2k+1)!} \gamma^{2j -2k+1}
\end{equation*} 
for $j \geq 2$. 

Combining the above formulae gives us our result.
\end{proof}
If the set of stuffed maps is glued from a finite set of 2-cells, then this equation is a polynomial and $\gamma$ is an algebraic. In its full generality, it is not entirely clear to the author how to bijectively interpret this equation. However, we will discuss this bijection in detail for a specific case in Section \ref{sec:unstab;e}. Note that by using the above proposition, Corollary 3.1.2 of \cite{eynard2016counting}, and Borot's functional equation, one can explicitly compute any generating function $\mathcal{T}_{\ell}$ in terms of the Boltzmann weights. This will be done for an example in Section \ref{sec:unstab;e}.


\subsection{Adding a point}
The relationship between the generating functions of pointed stuffed maps and $\gamma$ is much more complicated than in the case of ordinary maps. Graphically, this complication boils down to the fact that when distinguishing a point on a rooted stuffed map it is very possible that the point lies on a different component than the root. More precisely, one of the following situations occurs. First, if the vertex removed belongs to the gasket, then the result is a stuffed map whose gasket is pointed. 

The second scenario is that the vertex removed belongs to a connected component that is not the gasket. There exists a unique shortest path of branches between the gasket and the connected component with a distinguished vertex. Removing the 2-cell with weight $t_{i_{1},...,i_{k}}$ of the first bridge along this path results in every connected component that was connected to the multi-edge being topologically disconnected from the others. One can realize each disconnected component as its own stuffed map. The connected component containing the gasket now has a distinguished face which was the boundary of the bridge. On the other components the other boundaries of the removed 2-cell become boundaries of the new stuffed map once a root is assigned. For an edge of length $i_{j}$ there are precisely $i_{j}$ choices. This corresponds to the factor of $\frac{1}{i_{j}}$.

It will follow from the functional equation that the process described above of assigning a point is a bijection. Recall that on the level of generating functions, distinguishing a vertex corresponds to formally differentiating with respect to $t$. Formally differentiating both sides of \eqref{eq:functional}  with respect to $t$ one obtains
\begin{equation}\label{eq:functional pointed}
		\mathcal{T}_{2\ell}^{\circ}(t,\mathbf{t}) 
 =  T_{2\ell}^{\circ} (t,C(t,\mathbf{t})) + \left(\sum_{i}\frac{\partial }{\partial t_{i}}T_{2\ell} (t,\mathbf{t})\right)|_{\mathbf{t} = C(t,\mathbf{t})}\sum_{k =1} \frac{1}{(k-1)!}\left(\sum_{i_{2},...,i_{k}}t_{2\ell,i_{2},...,i_{k}} \prod_{j=2}^{k}\frac{\mathcal{T}^{\circ}_{i_{j}}(t,\mathbf{t}) }{i_{j}}\right).
\end{equation}
Each term corresponds to precisely one step in the point assigning process. On the right-hand side the first term is the generating function for stuffed maps with pointed gaskets. The second term is the product of generating series of gaskets with one face removed and some collection of generating functions of stuffed maps. The following proposition further reduces this equation to a relationship between pointed stuffed maps and stuffed maps with pointed gaskets. The proof, somewhat implicitly, relies heavily on properties of the Zhukovsky transform.
\begin{theorem}\label{prop: functional pointed}
    The following functional relationship holds between the generating function of pointed stuffed maps with boundaries and the generating function of stuffed maps with pointed gaskets:

    \begin{align}\label{eq:functional pointed main}
	&\mathcal{T}_{2\ell}^{\circ}
 =  {2 \ell \choose \ell}\gamma^2 \nonumber\\
 &+ \left(\sum_{k}\frac{\gamma^{2k-\ell}}{k} \sum_{\substack{0\leq q \leq k\\ 0 \leq j \leq \ell\\ q+j =\ell}}q { 2k \choose q+k}  {2\ell \choose j}\right)\sum_{k =1} \frac{1}{(k-1)!}\left(\sum_{i_{2},...,i_{k}}t_{2\ell,i_{2},...,i_{k}} \prod_{j=2}^{k}\frac{\mathcal{T}^{\circ}_{i_{j}}}{i_{j}} \right),
\end{align}
where the first sum  is over all indices of weights for polygons. 
\end{theorem}
\begin{proof}
Recall Lemma 3.1.4 from \cite{eynard2016counting}, which gives us simplified forms of the derivatives of the generating function $W(x)$ in Zhukovsky space
\begin{equation*}
    x'(z)\left(\frac{\partial}{\partial t} W((x(z)) \right)|_{x(z)}= \frac{1}{z}
\end{equation*}
and 
\begin{equation*}
   x'(z)\left( \frac{\partial}{\partial t_{2k}} W((x(z))\right)|_{x(z)} = - \frac{1}{2k}\frac{\partial}{\partial z} ((x(z)^{2k})_{-},
\end{equation*}
where the minus subscript denotes the negative part of the Laurent polynomial. Transforming the first equation one finds that 
\begin{align*}
    \frac{\partial}{\partial t} T_{2\ell} &= - \text{Res}_{x \rightarrow\infty} \frac{\partial }{\partial t} W(x) dx \\
    &=  - \text{Res}_{z \rightarrow\infty} \left(\frac{\partial }{\partial t} W(x(z))\right)x(z)^{2\ell} x'(z) dz\\
    &=  - \text{Res}_{z \rightarrow\infty} \left(\frac{1}{z}\right)\gamma^{2\ell} \sum_{j=0}^{2\ell} {2\ell \choose j} z^{2\ell-2j} dz \\
    &=   {2\ell \choose \ell}\gamma^{2\ell}.
\end{align*}

Transforming the second equation one finds that 
\begin{align*}
    \frac{\partial}{\partial t_{2k}} T_{2\ell} &= - \text{Res}_{x \rightarrow\infty} \frac{\partial }{\partial t_{2k}} W(x) dx \\
    &=  - \text{Res}_{z \rightarrow\infty} \left(\frac{\partial }{\partial t_{2k}} W(x(z))\right)x(z)^{2\ell} x'(z) dz\\
    &=  -\text{Res}_{z \rightarrow\infty} \left(\frac{\gamma^{4k}}{2k} \frac{\partial }{\partial z} \left(\sum_{q =0}^{2k} { 2k \choose q} z^{2k -2q}\right)_{-}\right)\left(\gamma^{2\ell} \sum_{j=0}^{2\ell} {2\ell \choose j} z^{2\ell-2j}  \right) dz \\
    &= -\frac{\gamma^{4k}}{2k} \text{Res}_{z \rightarrow\infty} \left( \frac{\partial }{\partial z} \sum_{q =0}^{k} { 2k \choose q+k} z^{ -2q}\right)\left(\gamma^{2\ell} \sum_{j=0}^{2\ell} {2\ell \choose j} z^{2\ell-2j}  \right) dz \\
    &= \frac{\gamma^{4k}}{k} \text{Res}_{z \rightarrow\infty} \left( \sum_{q =0}^{k}q { 2k \choose q+k} z^{ -2q-1}\right)\left(\gamma^{2\ell} \sum_{j=0}^{2\ell} {2\ell \choose j} z^{2\ell-2j}  \right)  dz \\
    &= \frac{\gamma^{4k-2\ell}}{k} \text{Res}_{z \rightarrow\infty}  \sum_{q =0}^{k}\sum_{j=0}^{2\ell}q { 2k \choose q+k}  {2\ell \choose j} z^{2\ell-2j-2q-1}  dz \\
     &=\frac{\gamma^{4k-2\ell}}{k} \sum_{\substack{0\leq q \leq k\\ 0 \leq j \leq \ell\\ q+j =\ell}}q { 2k \choose q+k}  {2\ell \choose j}.
\end{align*}
Applying these formulae to the functional equation \eqref{eq:functional} that relates ordinary maps to stuffed maps completes the proof.
\end{proof}

As a consequence of this proposition, one can deduce that the generating functions of pointed rooted stuffed maps are algebraic if the set of 2-cells is finite. When this is the case, one can solve a finite system of non-linear equations for the generating function of pointed rooted stuffed maps with coefficients in $\mathbb{Q}[\gamma]$ and the generating function $\gamma^2$ is itself algebraic since it is a solution of the polynomial equation in Proposition \ref{prop:tree}.


     \section{Example: Bridged quadrangulations} \label{sec:unstab;e}
Bridged quadrangulations appear in the computation of a quartic matrix model with a $(\tr H^2)^2$ interaction \cite{das1990new,klebanov1995non,hessam2022noncommutative,khalkhali2024coloured}. Let $S$ consist of a quadrangle and a 2-cell with two boundaries of length 2. We refer to the latter 2-cell as a $\textit{bridge}$ and the maps glued from $S$ (as well as some boundaries of fixed lengths) as \textit{bridged quadrangulations}.

We start by considering ordinary maps glued from quadrangulations and 2-gons. From the tree equation for ordinary maps it is known that that the generating function for mobiles corresponding to rooted and pointed quadrangulations and 2-gons is a root of the polynomial
 $$\gamma^2  =t+ t_{2} \gamma^2 + 3 t_{4} \gamma^{4}.$$ The only root that agrees with the small $t$ behaviour of the solution is 
    \begin{equation*}
        \gamma^2 = \frac{1}{6 t_{4}}\left(1-t_{2} + \sqrt{1-2t_2 + t_{2}^2 -12 t t_{4}} \right).
    \end{equation*}
From equation \eqref{eq:moments in terms of gamma}, it is know that 
\begin{equation*}
    T_{2}(t,t_{2},t_{4}) = \gamma^3 \left( \frac{t}{\gamma} + t_{4} \gamma^{3}\right) = t \gamma^2 + t_{4} \gamma^{6}.
\end{equation*}

We can now apply Borot's functional relation to consider the generating function of stuffed maps with a pointed and rooted gasket, as is done more generally in Proposition \ref{prop:tree}. In this case, one needs to substitute $t_{2}$ with $ t_{2} + \frac{1}{2}t_{2,2} \mathcal{T}_{2}$, and then set $t_{2} =0$, resulting in 
\begin{equation}\label{eq:tree quads}
    \gamma^2 -t =  \frac{t t_{2,2}}{2} \gamma^4 + 3 t_{4} \gamma^{4} + \frac{t_{2,2}t_{4} }{2} \gamma^{8}. 
\end{equation}
The resulting root that agrees with the small $t$ behaviour is

\begin{equation*}
  \gamma^2 =   \frac{1}{2} \sqrt{Q-\frac{R}{3 \sqrt[3]{2} t_{2,2} t_{4}}}-\frac{1}{2} \sqrt{\frac{4}{t_{2,2}  t_{4}\sqrt{Q-\frac{R}{3 \sqrt[3]{2} t_{2,2} t_{4}}}}+Q+\frac{R}{3 \sqrt[3]{2} t_{2,2} t_{4}}},
\end{equation*}

where 
\begin{align*}
    R &:= 2 (-t t_{2,2}-6 t_{4})^3-144 t t_{2,2} t_{4} (-t t_{2,2}-6 t_{4})-108 t_{2,2} t_{4}\\
    &+\sqrt{\left(2 (-t t_{2,2}-6 t_{4})^3-144 t t_{2,2} t_{4} (-t t_{2,2}-6 t_{4})-108 t_{2,2} t_{4}\right)^2-4 \left((-t t_{2,2}-6 t_{4})^2+24 t t_{2,2}
   t_{4}\right)^3}
\end{align*}
and 
\begin{equation*}
   Q:= -\frac{\sqrt[3]{2} \left(t^2 t_{2,2}^2+36 t t_{2,2} t_{4}+36 t_{4}^2\right)}{3 R t_{2,2} t_{4}}-\frac{-t t_{2,2}-6 t_{4}}{3 t_{2,2} t_{4}}-\frac{t t_{2,2}+6 t_{4}}{t_{2,2} t_{4}}.
\end{equation*}

With this one can then compute any generating function of stuffed maps with boundaries. To see this, first recall the functional equation \eqref{eq:functional} for $\ell =2$ and equation \eqref{eq:moments in terms of gamma}:
\begin{align*}
    \mathcal{T}_{2}(t,t_{22},t_{4}) &= T_{2}\left(t, 1-\frac{t_{2,2}}{2} \mathcal{T}_{2}(t,t_{22},t_{4}),t_4\right)\\
    &= t \gamma\left(t, 1-\frac{t_{2,2}}{2} \mathcal{T}_{2}(t,t_{22},t_{4}),t_4\right)^2 - t_{4}  \gamma\left(t, 1-\frac{t_{2,2}}{2} \mathcal{T}_{2}(t,t_{22},t_{4}),t_4\right)^6 \\
    &= 2 t \gamma^2 - t_{4}\gamma^{6}.
\end{align*}
Using this formula, the generating function for any other boundary length can be computed using \eqref{eq:moments in terms of gamma} and the functional equation. Note that for odd $\ell$ no such unstable quadrangulation exists. This follows quickly from the functional equation and the fact that there are no quadrangulations with odd boundaries.

Proposition \ref{prop:tree} allows us to solve for the generating function of rooted pointed maps explicitly in terms of maps with pointed and rooted gaskets:
$$ \mathcal{T}^{\circ}_{2} = 2 \gamma^2 + \frac{t_{2,2}}{2}\gamma^2 \mathcal{T}^{\circ}_{2},$$
which can be rearranged to 
\begin{equation*}
     \mathcal{T}^{\circ}_{2} = \frac{2 \gamma^2}{1 -\frac{t_{2,2}}{2}\gamma^2}
\end{equation*}
for $t_{2,2}\gamma^2 \not =2.$

\printbibliography

@article{bonzom2017,
  title={Colored Triangulations of Arbitrary Dimensions are Stuffed Walsh Maps},
  author={Bonzom, Valentin and Lionni, Luca and Rivasseau, Vincent},
  journal={The Electronic Journal of Combinatorics},
  volume={24},
  number={1},
  pages={P1--56},
  year={2017}
}

@article{le2010scaling,
  title={Scaling Limits of Random Trees and Planar Maps},
  author={Le Gall, Jean-Fran{\c{c}}ois and Miermont, Gr{\'e}gory},
  journal={Probability and Statistical Physics in Two and More Dimensions},
  pages={155},
  year={2010},
  publisher={Citeseer}
}

@article{rhodes2014gaussian,
  title={Gaussian multiplicative chaos and applications: A review},
  author={Rhodes, R{\'e}mi and Vargas, Vincent},
  journal={Probability Surveys},
  volume={11},
  year={2014}
}

@article{duplantier2009duality,
  title={Duality and the Knizhnik-Polyakov-Zamolodchikov relation in Liouville quantum gravity},
  author={Duplantier, Bertrand and Sheffield, Scott},
  journal={Physical Review Letters},
  volume={102},
  number={15},
  pages={150603},
  year={2009},
  publisher={APS}
}

@article{klebanov1995non,
  title={Non-perturbative solution of matrix models modified by trace-squared terms},
  author={Klebanov, Igor R and Hashimoto, Akikazu},
  journal={Nuclear Physics B},
  volume={434},
  number={1-2},
  pages={264--282},
  year={1995},
  publisher={Elsevier}
}

@article{marckert2007invariance,
  title={Invariance principles for random bipartite planar maps},
  author={Markert, Jean-Francois and Miermont, Gr{\'e}gory},
  journal={Annals of probability},
  volume={35},
  number={5},
  pages={1642--1705},
  year={2007}
}

@article{gwynne2017scaling,
  title={Scaling limit of the uniform infinite half-plane quadrangulation in the Gromov-Hausdorff-Prokhorov-uniform topology},
  author={Gwynne, Ewain and Miller, Jason},
  journal={Electron. J. Probab},
  volume={22},
  number={84},
  pages={1--47},
  year={2017}
}

@inproceedings{miermont2009tessellations,
  title={Tessellations of random maps of arbitrary genus},
  author={Miermont, Gr{\'e}gory},
  booktitle={Annales scientifiques de l'Ecole normale sup{\'e}rieure},
  volume={42},
  number={5},
  pages={725--781},
  year={2009}
}

@article{chapuy2009bijection,
  title={A bijection for rooted maps on orientable surfaces},
  author={Chapuy, Guillaume and Marcus, Michel and Schaeffer, Gilles},
  journal={SIAM Journal on Discrete Mathematics},
  volume={23},
  number={3},
  pages={1587--1611},
  year={2009},
  publisher={SIAM}
}

@article{bettinelli2017compact,
  title={Compact brownian surfaces I: Brownian disks},
  author={Bettinelli, J{\'e}r{\'e}mie and Miermont, Gr{\'e}gory},
  journal={Probability Theory and Related Fields},
  volume={167},
  pages={555--614},
  year={2017},
  publisher={Springer}
}

@article{ambjorn1993baby,
  title={Baby universes in 2d quantum gravity},
  author={Ambj{\o}rn, Jan and Jain, Sanjay and Thorleifsson, Gudmar},
  journal={Physics Letters B},
  volume={307},
  number={1-2},
  pages={34--39},
  year={1993},
  publisher={Elsevier}
}

@article{le2019brownian,
  title={Brownian geometry},
  author={Le Gall, Jean-Fran{\c{c}}ois},
  journal={Japanese Journal of Mathematics},
  volume={14},
  number={2},
  pages={135--174},
  year={2019},
  publisher={Springer}
}

@article{ambjorn2001lorentzian,
  title={Lorentzian 3d gravity with wormholes via matrix models},
  author={Ambj{\o}rn, Jan and Jurkiewicz, Jerzy and Loll, Renate and Vernizzi, Graziano},
  journal={Journal of High Energy Physics},
  volume={2001},
  number={09},
  pages={022},
  year={2001},
  publisher={IOP Publishing}
}

@article{das1990new,
  title={New critical behavior in d= 0 large-N matrix models},
  author={Das, Sumit R and Dhar, Avinash and Sengupta, Anirvan M and Wadia, Spenta R},
  journal={Modern Physics Letters A},
  volume={5},
  number={13},
  pages={1041--1056},
  year={1990},
  publisher={World Scientific}
}

@article{hessam2023double,
  title={Double scaling limits of Dirac ensembles and Liouville quantum gravity},
  author={Hessam, Hamed and Khalkhali, Masoud and Pagliaroli, Nathan},
  journal={Journal of Physics A: Mathematical and Theoretical},
  volume={56},
  number={22},
  pages={225201},
  year={2023},
  publisher={IOP Publishing}
}

@article{brown1965existence,
  title={On the existence of square roots in certain rings of power series},
  author={Brown, William G},
  journal={Mathematische Annalen},
  volume={158},
  number={2},
  pages={82--89},
  year={1965},
  publisher={Springer}
}

@article{chapuy2014introduction,
  title={An introduction to map enumeration},
  author={Chapuy, Guillaume},
  journal={Lecture Notes for AEC Summer School at Hagenberg},
  year={2014}
}

@article{tutte1968enumeration,
  title={On the enumeration of planar maps},
  author={Tutte, William T},
  journal={Bulletin of the American Mathematical Society},
  volume={74},
  number={1},
  pages={64--74},
  year={1968}
}

@article{bouttier2003geodesic,
  title={Geodesic distance in planar graphs},
  author={Bouttier, J{\'e}r{\'e}mie and Di Francesco, Philippe and Guitter, Emmanuel},
  journal={Nuclear physics B},
  volume={663},
  number={3},
  pages={535--567},
  year={2003},
  publisher={Elsevier}
}

@article{bouttier2004planar,
  title={Planar maps as labeled mobiles},
  author={Bouttier, J{\'e}r{\'e}mie and Di Francesco, Phillipe and Guitter, Emmanuel},
  journal={The Electronic Journal of Combinatorics},
  volume={11},
  year={2004}
}

@phdthesis{schaeffer1998conjugaison,
  title={Conjugaison d’arbres et cartes combinatoires al{\'e}atoires},
  author={Schaeffer, Gilles},
  year={1998},
  school={PhD thesis, Universit{\'e} Bordeaux I}
}

@article{cori1981planar,
  title={Planar maps are well labeled trees},
  author={Cori, Robert and Vauquelin, Bernard},
  journal={Canadian Journal of Mathematics},
  volume={33},
  number={5},
  pages={1023--1042},
  year={1981},
  publisher={Cambridge University Press}
}

@article{schaeffer2015planar,
  title={Planar maps},
  author={Schaeffer, Gilles},
  journal={Handbook of enumerative combinatorics},
  volume={87},
  number={335},
  pages={61},
  year={2015},
  publisher={CRC Press}
}

@article{borot2014formal,
  title={Formal multidimensional integrals, stuffed maps, and topological recursion},
  author={Borot, Ga{\"e}tan},
  journal={Annales de l’Institut Henri Poincar{\'e} D},
  volume={1},
  number={2},
  pages={225--264},
  year={2014}
}

@article{hessam2022noncommutative,
  title={From noncommutative geometry to random matrix theory},
  author={Hessam, Hamed and Khalkhali, Masoud and Pagliaroli, Nathan and Verhoeven, Luuk S},
  journal={Journal of Physics A: Mathematical and Theoretical},
  volume={55},
  number={41},
  pages={413002},
  year={2022},
  publisher={IOP Publishing}
}

@article{le2013uniqueness,
  title={Uniqueness and universality of the Brownian map},
  author={Le Gall, Jean-Fran{\c{c}}ois},
  journal={The Annals of Probability},
  pages={2880--2960},
  year={2013},
  publisher={JSTOR}
}

@article{khalkhali2024coloured,
  title={Coloured combinatorial maps and quartic bi-tracial 2-matrix ensembles from noncommutative geometry},
  author={Khalkhali, Masoud and Pagliaroli, Nathan},
  journal={Journal of High Energy Physics},
  volume={2024},
  number={5},
  pages={1--28},
  year={2024},
  publisher={Springer}
}

@article{eynard2016counting,
  title={Counting surfaces},
  author={Eynard, Bertrand and others},
  journal={Progress in Mathematical Physics},
  volume={70},
  pages={414},
  year={2016},
  publisher={Springer}
}

@article{korchemsky1992matrix,
  title={Matrix model perturbed by higher order curvature terms},
  author={Korchemsky, Gregory P},
  journal={Modern Physics Letters A},
  volume={7},
  number={33},
  pages={3081--3100},
  year={1992},
  publisher={World Scientific}
}

	\end{document}